\documentclass[10pt,a4paper]{article}
\usepackage{amssymb,comment,amsmath,amsthm}
\usepackage{graphicx}
\usepackage{color}
\usepackage{titling,titlesec}
\usepackage{url}
\usepackage{amsthm,lipsum}
\usepackage{geometry}
\usepackage{tikz}
\usepackage{hyperref}
\titleformat{\subsection}[runin]
{\normalfont\bfseries}{}{0em}{}[.]
\newcounter{foo}

\newfont{\blb}{msbm10 scaled\magstep1}
\newfont{\comp}{cmr12 scaled\magstep1}
\newfont{\compb}{cmr10 scaled\magstep2}
\newfont{\sbb}{cmssbx10 scaled\magstep3}
\newfont{\sbbb}{cmssbx10 scaled\magstep5}
\newfont{\sbs}{cmssbx10 scaled\magstep1}
\newtheorem{theorem}{Theorem}
\newtheorem{lemma}[subsection]{Lemma}
\newtheorem{claim}[subsection]{Claim}

\newtheorem{corollary}[subsection]{Corollary}
\newtheorem{proposition}{Proposition}
\newtheorem{definition}[foo]{Definition}
\newtheorem{conjecture}{Conjecture}

\newcommand{\cev}[1]{\reflectbox{\ensuremath{\vec{\reflectbox{\ensuremath{#1}}}}}}

\newcommand{\im}{\mathrm{Im}}
\newcommand{\mon}{\mathrm{Inj}}
\newcommand{\Hom}{\mathrm{Hom}}
\newcommand{\Mon}{\mathrm{Inj}}
\newcommand{\vep}{\varepsilon}

\allowdisplaybreaks
\setlength{\parindent}{0pt} \oddsidemargin -0.1in \evensidemargin
-0.1in \topmargin -0.4in \textheight 9in \textwidth 6.3in

\geometry{margin = 1in}
\title{A Tight Lower Bound on Trees in Graphs}
\author{
Chase Wilson\thanks{Department of Mathematics, University of California, San Diego, CA, 92093-0112 USA. E-mail: c7wilson@ucsd.edu}}
\begin{document}
\date{ }
\setlength{\droptitle}{-2.5cm}
\maketitle
\vspace{-0.5in}
\begin{abstract}
Mubayi and Verstraete conjectured that if $T$ is a tree on $t + 1$ vertices, then any $n$-vertex graph $G$ with average degree $d$ contains at least 
\[
n d(d - 1) \cdots (d - t + 1)
\]
labeled copies of $T$ as long as $d$ is sufficiently large compared to $t$. We prove this is true and show that when the diameter of $T$ is at least $3$, equality holds iff $G$ is the disjoint union of cliques of size $d + 1$. When the diameter is $2$, equality holds iff $G$ is $d$-regular.
\end{abstract}
\section{Introduction}
Let $G$ and $H$ be graphs. A {\it graph homomorphism} from $H$ to $G$ is a function $\varphi : V(H) \rightarrow V(G)$ such that $\{\varphi(u), \varphi(v) \} \in E(G)$ whenever $\{u, v\} \in E(H)$ -- in other words, an edge-preserving map. When $\varphi$ is an injection, then $\varphi$ is called an {\em injective homomorphism}.
We let $\Hom(H, G)$ and $\Mon(H, G)$ denote the set of homomorphisms and injective homomorphisms from $H$ to $G$ respectively; $\Mon(H, G)$ is precisely the number of labeled
subgraphs of $G$ which are isomorphic to $H$. The studies of $|\Hom(H,G)|$ and $|\Mon(H,G)|$ have a long research history, including celebrated problems such as the Erd\H{o}s-Rademacher problem on the minimum number of triangles in an $n$-vertex graph with $m$ edges, solved by Liu, Pikhurko and Staden~\cite{LPS}. The 
remarkable Sidorenko conjecture~\cite{Sidorenko} and related Erd\H{o}s-Simonovits conjecture~\cite{ES} that for any bipartite graph $H$,
if $G_n$ is any $n$-vertex graph of density at least $p \in [0,1]$, then
\begin{equation}\label{sidconj} \liminf_{n \rightarrow \infty} \frac{|\Hom(H,G)|}{n^{|V(H)|}} \geq p^{|E(H)|}.
\end{equation}
For progress on this conjecture, see Conlon, Lee and Sidorenko~\cite{CLS} and the references therein. 

\medskip

A special case is the problem of counting walks of a specific length: if $P_t$ denotes a path with $t$ edges, then the number of walks of length $t$ in $G$ is precisely $|\Hom(P_t,G)|$. This problem has a long history, going back to early results of Atkinson, Moran and Watteson~\cite{AMW}, Mulholland and Smith~\cite{MS} and Blakley and Roy~\cite{BR}. A {\em walk of length $t$} in a graph $G$ is an alternating sequence $(v_1,e_1,v_2,\dots,v_t,e_k,v_{t + 1})$ where $v_i \in V(G)$ and $e_i = \{v_i,v_{i + 1}\}$ for $1 \leq i \leq t$. Note that such a walk allows repeated vertices $v_i$ and repeated edges $e_i$, and if $P_t$ denotes a path with $t$ edges, then the number of walks of length $t$ in $G$ is precisely $|\Hom(P_t,G)|$.  It is not hard to see that if $G$ is an $n$-vertex $d$-regular graph, then the number of walks of length $t$ in $G$ is exactly $nd^t$. If $G$ is a graph of average degree $d$, and $t \geq 2$ is even, then using spectral methods it is 
straightforward to see that the number of walks of length $t$ in $G$ is at least $nd^t$. Mulholland and Smith~\cite{MS} and, independently, Blakley and Roy~\cite{BR}, were the first to show that the same conclusion holds when $t$ is odd, extending the earlier work of Atkinson, Moran and Watteson~\cite{AMW} for $t = 3$. They established {\em H\"{o}lder's inequality} for entrywise non-negative symmetric matrices $A$, namely, if $z \in \mathbb R^n$ is any entrywise non-negative vector, and $A$ is any entrywise non-negative symmetric $n$ by $n$ matrix, then $\left<A^t z,z\right> \geq \left<Az,z\right>^t$ -- see also London~\cite{London} for a generalization. A relatively short proof of this inequality is available via the so-called {\em tensor trick} (see~\cite{MVbook} page 92). 
Erd\H{o}s and Simonovits~\cite{ES2} conjectured further that $\left<A^tz,z\right>^{1/t}$ 
is non-decreasing over odd values of $t$, and this was proved by Sa\u{g}lam. Subsequently, a short proof was found by Blekherman and Raymond~\cite{BRa}, 
and further generalizations were investigated by T\"{a}ubig and Weihmann~\cite{TW}. 

\medskip

A {\em non-backtracking walk} is a walk $(v_1,e_1,v_2,\dots,v_t,e_t,v_{t + 1})$ such that $e_i \neq e_{i + 1}$ for $1 \leq i \leq t$ -- in other words, 
the walk does not traverse an edge twice in consecutive steps. The work of Hoory~\cite{Hoory} and Alon, Hoory and Linial~\cite{AHL} uses an entropy based approach to 
prove the above-mentioned results of Blakley-Roy and Mulholland-Smith on counting walks of length $t$, and further showed that the number of non-backtracking walks of 
length $t$ in an $n$-vertex graph $G$ of average degree $d \geq 2$ is at least $nd(d - 1)^{t-1}$ -- indeed they constructed a random greedy homomorphism and showed that for every edge $e \in E(T)$, the marginal distribution of the image of $e$ is a uniform random edge in $G$. Erd\H{o}s and Simonovits~\cite{ES2} showed that 
the number of paths of length $t$ in $G$ -- in other words $|\Mon(P_t,G)|$ -- is at least $nd^t - O(nd^t/\log\log d)$ as $d \rightarrow \infty$. 
Sidorenko proved (\ref{sidconj}) for a variety of graphs, including trees -- see~\cite{CLS,Sidorenko2}. Mubayi and Verstra\"{e}te~\cite{MV} gave a stronger conclusion, namely for $d \geq t$ and any tree $T$ with $t$ edges:  
\[ |\Mon(T,G)| \geq n(1 - t^5/d^2)d(d - 1)\dots(d - t + 1).\]
This strengthens results of Dellamonica et al~\cite{DH}.
Note that the number of copies of $T$ in a disjoint union of complete graphs $K_{d + 1}$ is precisely $nd(d - 1) \dots (d - t + 1)$. It was conjectured in~\cite{MV}
that the term $1 - t^5/d^2$ can be removed if $d$ is large enough relative to $t$, to obtain a tight bound on the number of copies of $T$ in any $n$-vertex 
graph $G$ of average degree $d$. In this paper, we prove this conjecture:

\begin{theorem} {\label{main}}
For all integers $t > 0$, there exists a $d_0(t)$ such that if $T$ is a $t + 1$-vertex tree and $G$ is an $n$-vertex graph with average degree $d \geq d_0(t)$, then
\begin{equation} {\label{mainbound}}
	|\Mon(T, G)| \geq n d(d - 1) \cdots (d - t + 1).
\end{equation}
If $T$ has diameter at least 3, then equality holds iff $G$ is the disjoint union of cliques of size $d + 1$.
\end{theorem}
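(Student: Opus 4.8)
The plan is to count injective homomorphisms by building one at random in the spirit of the Alon--Hoory--Linial greedy homomorphism~\cite{AHL}, and then to read off the equality case by following the argument backwards. Fix a leaf $x_0$ of $T$, let $x_1$ be its neighbour, and extend to a tree order $V(T)=\{x_0,x_1,\dots,x_t\}$ in which every $x_i$ with $i\ge1$ has a unique neighbour $x_{p(i)}$ with $p(i)<i$; since $x_0$ is a leaf, $p(i)=0$ only when $i=1$. Sample $\varphi\colon V(T)\to V(G)$ by letting $\varphi(x_0)=w$ with probability $\deg_G(w)/(nd)$ and, given $\varphi(x_0),\dots,\varphi(x_{i-1})$, letting $\varphi(x_i)$ be a uniform neighbour of $\varphi(x_{p(i)})$ not already used; since $d\ge d_0(t)>t$ this never gets stuck, so it always outputs some element of $\Mon(T,G)$. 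With $a_i(\varphi)=|\{j<i:\varphi(x_j)\varphi(x_{p(i)})\in E(G)\}|$ the number of forbidden already-placed vertices at step $i$, the probability of a fixed $\varphi$ is $\tfrac{\deg_G(\varphi(x_0))}{nd}\prod_{i=1}^{t}(\deg_G(\varphi(x_{p(i)}))-a_i(\varphi))^{-1}$, and since $a_1=0$ the $i=1$ factor is $\deg_G(\varphi(x_0))$, which cancels the prefactor. Hence $|\Mon(T,G)|=\sum_\varphi 1=\sum_\varphi \Pr[\varphi]^{-1}\Pr[\varphi]$ is given by the exact identity
\[
|\Mon(T,G)|\;=\;nd\cdot\mathbb{E}_\varphi\!\left[\;\prod_{i=2}^{t}\bigl(\deg_G(\varphi(x_{p(i)}))-a_i(\varphi)\bigr)\right].
\]

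Because $a_i(\varphi)\le i-1$ always, the bound \eqref{mainbound}, i.e.\ $|\Mon(T,G)|\ge nd\prod_{i=2}^{t}(d-i+1)=n\,d(d-1)\cdots(d-t+1)$, reduces to $\mathbb{E}_\varphi\bigl[\prod_{i=2}^{t}(\deg_G(\varphi(x_{p(i)}))-a_i(\varphi))\bigr]\ge\prod_{i=2}^{t}(d-i+1)$. When $G$ is $d$-regular this is pointwise and trivial, every factor being $\ge d-(i-1)>0$. The substance is the non-regular case: under the greedy law each $\varphi(x_j)$ is distributed proportionally to degree up to a $1+O(t/d)$ distortion from the ``not already used'' restriction, so $\mathbb{E}\,\deg_G(\varphi(x_j))\ge d$ by Cauchy--Schwarz, and one must promote these one-coordinate bounds to a bound on the product by controlling the positive correlations among the $t-1$ factors (for instance by a correlation inequality, or by a further induction on the leaves of $T$ that tracks how the degree of the image of the attachment vertex correlates with the number of copies) while absorbing the $O(t/d)$ distortions and the negative corrections coming from non-injective homomorphisms. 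Since \eqref{mainbound} is tight, no lower-order slack is permitted here, so these corrections must be handled exactly; this is the main obstacle, and it is exactly what forces $d$ large relative to $t$.

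For equality, suppose $d\ge d_0(t)$ and $|\Mon(T,G)|=n\,d(d-1)\cdots(d-t+1)$. Following equality through the estimates above forces $G$ to be $d$-regular (equality in the Cauchy--Schwarz steps forces a constant degree sequence). For $d$-regular $G$ the identity, applied with \emph{every} tree order, reads $|\Mon(T,G)|=n\,\mathbb{E}_\varphi[\prod_{i=1}^{t}(d-a_i(\varphi))]$ with all factors $\ge d-(i-1)>0$, so equality forces $a_i(\varphi)=i-1$ for every $i$, every tree order, and (since the greedy has full support) every $\varphi\in\Mon(T,G)$; one checks that this is equivalent to the statement that every injective copy of $T$ in $G$ spans a clique. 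If $\mathrm{diam}(T)\ge3$ then $|V(T)|=t+1\ge4$, and a $d$-regular $G$ in which every injective copy of $T$ spans a clique and which contains at least one copy of $T$ must be a disjoint union of cliques $K_{d+1}$: letting $C$ be a maximal clique (so $|C|\ge t+1$), were some $c\in C$ to have a neighbour $u\notin C$ we could choose $c'\in C$ non-adjacent to $u$ and embed $T$ by sending a leaf to $u$, its neighbour to $c$, and the remaining $t-1$ vertices into $C\setminus\{c\}$ so as to cover $c'$, producing a copy whose image is not a clique; hence $C$ is a connected component, $d$-regularity gives $|C|=d+1$, and deleting it leaves a $d$-regular graph which by \eqref{mainbound} still satisfies \eqref{mainbound} with equality and still contains a copy of $T$, so we finish by induction on $n$. (Conversely, a disjoint union of $K_{d+1}$'s plainly attains equality.)

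Finally, the diameter-$2$ trees are exactly the stars $K_{1,t}$, for which $|\Mon(K_{1,t},G)|=\sum_{w\in V(G)}\deg_G(w)(\deg_G(w)-1)\cdots(\deg_G(w)-t+1)$, and strict convexity of $x\mapsto x(x-1)\cdots(x-t+1)$ on the integers $\ge t-1$ gives \eqref{mainbound} directly, with equality iff $G$ is $d$-regular; and the smallest diameter-$3$ case $T=P_3$ admits the explicit formula $|\Mon(P_3,G)|=\sum_{(u,v)}\bigl[(\deg_G(u)-1)(\deg_G(v)-1)-|N(u)\cap N(v)|\bigr]$, the sum over ordered edges, which together with convexity and an edge-codegree estimate serves as the base case for the inductive route above.
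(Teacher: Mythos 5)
Your setup coincides with the paper's: the same degree-biased, greedy injective embedding (the one from Mubayi--Verstra\"ete and Alon--Hoory--Linial), and your exact identity $|\Mon(T,G)|=nd\,\mathbb{E}_\varphi\bigl[\prod_{i\ge2}(\deg_G(\varphi(x_{p(i)}))-a_i(\varphi))\bigr]$ is just the exponentiated form of the entropy computation the paper runs. But the proposal stops exactly where the theorem begins. You correctly observe that for non-regular $G$ the marginal of each $\varphi(x_j)$ is only degree-proportional up to a $1+O(t/d)$ distortion and that, because \eqref{mainbound} has no slack, this distortion and the correlations between the $t-1$ factors ``must be handled exactly''; you then declare this the main obstacle and leave it unresolved (``a correlation inequality, or a further induction'' is not an argument). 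This is precisely the content of the paper: it writes $H[\phi^{i+1}\mid\phi^i]=\log(d-i)+\Pi_i^1+\Pi_i^2-\Pi_i^3$ and proves (Lemmas \ref{p1_bound}--\ref{p3_bound}, via the twist bijection on complete paths and the reversal estimates of Lemma \ref{reverse_bound}) that the harmful term $\Pi_i^3$ is supported only on \emph{non-complete} embeddings, where it is dominated by the Jensen slack $\Pi_i^1$ and the missing-edge gain $\Pi_i^2$ (Claim \ref{main_claim}). None of that mechanism, nor a substitute for it, appears in your write-up; a bare ``$\mathbb{E}\deg_G(\varphi(x_j))\ge d$ by Cauchy--Schwarz'' cannot survive a multiplicative $1-O(t/d)$ error when the target is tight. (You also omit the reduction to $\delta(G)\ge d/4$ --- the paper's Lemma \ref{weak} plus the vertex-deletion argument in the appendix --- which the error estimates rely on; and the ``negative corrections from non-injective homomorphisms'' you mention do not exist in this model, since the greedy sampler only outputs injective maps --- the loss comes from the injectivity constraint skewing the marginals.)

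The equality analysis inherits the gap: ``following equality through the estimates above forces $G$ to be $d$-regular'' refers to estimates you never established, whereas the paper reads the equality case off the explicit equality condition in Lemma \ref{p2_bound} (vertices at distance less than $\mathrm{diam}(T)$ must be adjacent, forcing components to be cliques, then equal clique sizes). Your combinatorial endgame for regular $G$ (equality forces $a_i(\varphi)=i-1$ for all tree orders, hence every copy of $T$ lies in a clique, hence disjoint $K_{d+1}$'s) is a reasonable sketch and could be repaired (take $C$ to be a maximal clique containing the image of some copy of $T$, so $|C|\ge t+1$), but as written it is conditional on the missing main estimate. So the proposal is a correct framing of the problem with the paper's central argument absent, not a proof.
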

 
The value of $d_0(t)$ obtained from our proof of Theorem \ref{main} is polynomial in $t$ -- of order at most $t^4$.
 It may be that the above theorem holds with $d_0(t) = t$. This would imply the famous Erd\H{o}s-S\'{o}s Conjecture~\cite{ErdosSos}, 
which states that every graph of average degree at least $t$ contains every tree with $t$ edges. There are many other applications that do not require $d_0(t) = t$. For example, counting paths in graphs comes up frequently in contexts such as bounding the spectral radius or energy of a graph or counting the number of words with a forbidden pattern \cite{MV}. We make the following conjecture, which remains open for paths of length $t \geq 4$.

\begin{conjecture}
Let $T$ be a tree with $t$ edges, and let $G$ be an $n$-vertex graph with average degree $d \geq t - 1$. Then $|\Hom(T,G)| \geq n(d)_t$. 
\end{conjecture}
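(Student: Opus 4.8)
\medskip
\noindent\textbf{Proof strategy.}
First I would reduce the conjecture to the cleaner inequality
\[
|\Hom(T,G)|\ \ge\ n\,d^{\,t},
\]
which holds for \emph{every} tree $T$ with $t$ edges and \emph{every} $n$-vertex graph $G$ of average degree $d$, with no lower bound on $d$ required. This suffices, because in the stated range $d\ge t-1$ each of the $t$ factors of $(d)_t=d(d-1)\cdots(d-t+1)$ lies in $[0,d]$, so $0\le(d)_t\le d^{\,t}$ and hence $|\Hom(T,G)|\ge n\,d^{\,t}\ge n(d)_t$. For $T=P_t$ the inequality $|\Hom(P_t,G)|\ge n\,d^{\,t}$ is precisely the Blakley--Roy/Mulholland--Smith bound on the number of walks of length $t$ recalled in the introduction, and for an arbitrary tree it is the finite form of Sidorenko's theorem, which is known for all trees as noted above; so the conjectured inequality, as stated, is already a consequence of these classical results. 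For completeness, here is a short self-contained proof of the $n\,d^{\,t}$ bound.

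Write $m=|E(G)|$, so that $d=2m/n$, and assume $m\ge1$ (otherwise $n\,d^{\,t}=0$). Root $T$ at an arbitrary vertex $r$ and construct a random homomorphism $\varphi\colon V(T)\to V(G)$, following Alon--Hoory--Linial~\cite{AHL}: give $\varphi(r)$ the degree-biased law $\mu(v)=\deg_G(v)/2m$, and, processing the other vertices of $T$ in breadth-first order, take $\varphi(u)$ uniformly at random in $N_G(\varphi(p(u)))$, where $p(u)$ denotes the parent of $u$. Since $T$ is a tree, $\varphi$ is always a genuine element of $\Hom(T,G)$; if $\nu$ is the law of $\varphi$ on $V(G)^{V(T)}$, then the entropy bound gives $|\Hom(T,G)|\ge|\mathrm{supp}\,\nu|\ge 2^{H(\nu)}$, where $H$ is Shannon entropy in bits. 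The reversibility identity $\sum_{v\sim w}\mu(v)/\deg_G(v)=\mu(w)$ shows, by induction down the tree, that the image of every vertex has law $\mu$; together with the chain rule and the Markov property of the tree (given $\varphi(p(u))$, the variable $\varphi(u)$ is independent of all previously exposed images) this gives
\[
H(\nu)\ =\ H(\mu)+t\cdot\EE_{v\sim\mu}\!\big[\log\deg_G(v)\big]\ =\ \log(2m)+(t-1)\,\EE_{v\sim\mu}\!\big[\log\deg_G(v)\big].
\]
Convexity of $x\log x$ yields $\EE_{v\sim\mu}[\log\deg_G(v)]=\tfrac{1}{2m}\sum_v\deg_G(v)\log\deg_G(v)\ge\log(2m/n)=\log d$, so (since $t\ge1$) $H(\nu)\ge\log(2m)+(t-1)\log d=\log n+t\log d$, and therefore $|\Hom(T,G)|\ge n\,d^{\,t}$.

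The main thing to flag is that the bound $n(d)_t$ is far from tight for homomorphisms — disjoint cliques $K_{d+1}$ and every $d$-regular graph have exactly $n\,d^{\,t}$ homomorphic images of $T$ — so the substantive open problem in this circle, and where I expect the genuine obstacle to lie, is the injective strengthening $|\Mon(T,G)|\ge n(d)_t$ for $d\ge t-1$; this is the version that remains open for paths of length $t\ge4$ and that would imply the Erd\H{o}s--S\'{o}s conjecture~\cite{ErdosSos}. To attack it I would run the greedy homomorphism above and then repair collisions by local surgery, but when $d$ exceeds $t-1$ by only a little there is essentially no slack for such repairs — exactly the regime left uncovered by Theorem~\ref{main}, whose proof needs $d\ge d_0(t)$ of order $t^4$.
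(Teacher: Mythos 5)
First, a point of orientation: the paper offers no proof of this statement at all --- it is stated as a conjecture that ``remains open for paths of length $t \geq 4$,'' with only the diameter-$2$ (star) case verified via Jensen's inequality applied to $|\Mon(T,G)| = \sum_{v} (d(v))_t$. So there is no paper proof to compare yours against, and any complete ``proof'' of the conjecture should raise a flag.

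The issue is which statement you have actually proved. Taken literally, with $\Hom$, your argument is correct: the Alon--Hoory--Linial random homomorphism with degree-biased root, the stationarity identity $\sum_{v\sim w}\mu(v)/\deg_G(v)=\mu(w)$, the chain rule, and Jensen on $x\log x$ do give $|\Hom(T,G)|\ge nd^{\,t}$, and $(d)_t\le d^{\,t}$ for $d\ge t-1$ finishes it. This is essentially the paper's own ``model proof'' in Section 2 (and classical: Sidorenko for trees, Blakley--Roy for paths), so as a derivation it is fine. But the surrounding text makes clear that ``$\Hom$'' in the conjecture is a slip for $\Mon$: the conjecture is introduced as the $d_0(t)\approx t$ strengthening of Theorem~\ref{main} whose truth would bear on Erd\H{o}s--S\'os, it is said to remain open for paths of length $t\ge 4$ (the homomorphism version is not open for any tree), and the sentence immediately after verifies the diameter-$2$ case in terms of $|\Mon(T,G)|$. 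Your proposal therefore settles only the weaker, already-known homomorphism bound and leaves the intended conjecture --- the injective count $|\Mon(T,G)|\ge n(d)_t$ with no slack in $d$ --- completely untouched. To your credit, your closing paragraph identifies exactly this as the real obstacle: the entropy/greedy argument loses control of the injectivity error terms when $d$ is only of order $t$, which is precisely why the paper needs $d_0(t)$ of order $t^4$. So the verdict is: correct proof of the literal (typo'd) statement, but no progress on the statement the paper is actually conjecturing, and no paper proof exists with which to align it.
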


It is easy to prove this conjecture when $T$ has diameter $2$, as $T$ is a star with $t$ leaves, and for a graph $G$ of average degree $d \geq t$, we may apply Jensen's inequality to 
\[
|\Mon(T, G) | = \sum_{v \in V(G)}  (d(v))(d(v) - 1) \cdots (d(v) - t + 1).
\]

Instead of proving Theorem \ref{main} directly, the focus of the paper will be on proving the following "weaker" lemma, from which the full Theorem \ref{main} easily follows [see appendix]

\begin{lemma}{\label{weak}}
For all integers $t > 0$, there exists a $d_0(t)$ such that if $T$ is a $t + 1$-vertex tree and $G$ is an $n$-vertex graph with average degree $d \geq d_0(t)$ and minimum degree $\delta(G) \geq d/4$, then
\begin{equation}
	|\Mon(T, G)| \geq n d(d - 1) \cdots (d - t + 1).
\end{equation}
If $T$ has diameter at least 3, then equality holds iff $G$ is the disjoint union of cliques of size $d + 1$.
\end{lemma}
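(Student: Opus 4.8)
The plan is to build an injective homomorphism from $T$ into $G$ vertex by vertex along a well-chosen ordering of $V(T)$, and to track a quantity that lower-bounds the expected number of completions at each stage. Fix a leaf $r$ of $T$ and root $T$ at $r$; order $V(T) = v_0, v_1, \dots, v_t$ so that each $v_i$ ($i \ge 1$) has exactly one neighbor $v_{p(i)}$ with $p(i) < i$ (a BFS/DFS order). We count embeddings by the product rule: having embedded $v_0, \dots, v_{i-1}$ as $x_0, \dots, x_{i-1}$, the next vertex $v_i$ may be sent to any neighbor of $x_{p(i)}$ not already used. Naively this gives at least $d(x_{p(i)}) - (i-1)$ choices, and since $\delta(G) \ge d/4$ one is tempted to sum/multiply directly; but to get the sharp constant $d(d-1)\cdots(d-t+1)$ one cannot afford to lose anything, so the argument must be a global convexity/entropy estimate rather than a greedy worst-case bound. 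I would therefore use the Alon--Hoory--Linial entropy framework (already invoked in the excerpt for non-backtracking walks): construct a random injective homomorphism $\varphi$ whose law is chosen to maximize entropy, establish that for each edge $\{v_{p(i)}, v_i\}$ the marginal of $\varphi(v_i)$ conditioned on $\varphi(v_{p(i)})$ is close to ``uniform neighbor,'' and then
\[
\log |\Mon(T,G)| \ \ge\ H(\varphi) \ =\ H(\varphi(v_0)) + \sum_{i=1}^{t} H\bigl(\varphi(v_i) \mid \varphi(v_0),\dots,\varphi(v_{i-1})\bigr),
\]
bounding each conditional entropy below using Jensen's inequality applied to $\sum_{v} d(v) = nd$ together with the constraint that at most $t$ vertices have been used so far. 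The minimum-degree hypothesis $\delta(G) \ge d/4$ is what makes the error terms (which are $O(t^2/d)$ per step) summable and ultimately absorbable once $d \ge d_0(t) = \Theta(t^4)$; this is the step I expect to be the main obstacle, since extracting the exact product $d(d-1)\cdots(d-t+1)$ --- not merely $nd^t(1-o(1))$ --- requires carefully choosing which vertex of $T$ to peel off at each stage (leaves first, to decouple the degree constraints) and showing the entropy deficit is exactly accounted for by the ``$-1, -2, \dots, -(t-1)$'' collisions.

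For the equality characterization when $\mathrm{diam}(T) \ge 3$, I would run the stability version of the above: if $|\Mon(T,G)|$ equals $nd(d-1)\cdots(d-t+1)$ exactly, then every inequality used must be tight. Tightness in the Jensen step forces $G$ to be $d$-regular; tightness in the ``no collisions beyond the forced ones'' step forces that for every embedded path of length $3$ in $T$, the two endpoints' neighborhoods interact as they would inside a clique. Concretely, pick a path $v_a v_b v_c v_e$ in $T$ (possible since the diameter is at least $3$); regularity plus the exact count forces, for every edge $\{x,y\}$ of $G$, that $N(x)$ and $N(y)$ together span a clique on $d+1$ vertices, because any missing edge among $N(x) \cup N(y) \setminus\{x,y\}$ or any vertex outside creates a strict surplus or a forbidden deficit in the count along this path. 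Iterating this local-clique conclusion across the connected graph $G$ (and using that $G$ is $d$-regular, so each clique is exactly $K_{d+1}$ and no two share a vertex) yields that $G$ is a disjoint union of copies of $K_{d+1}$. Conversely, a disjoint union of $K_{d+1}$'s achieves equality by the direct computation noted in the excerpt, so the characterization is sharp. The diameter-$2$ case is excluded here precisely because a star embeds with equality in any $d$-regular graph, matching the theorem's dichotomy.

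Finally, I would remark that the reduction in the appendix handles the general case: given an arbitrary $G$ of average degree $d$, one passes to a subgraph of minimum degree $\ge d/4$ (or iteratively prunes low-degree vertices, which can only decrease the count by a controlled amount on the pruned part while the leftover is counted separately), and applies Lemma \ref{weak}; the equality analysis propagates because any low-degree vertex strictly destroys tightness.
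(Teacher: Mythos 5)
There is a genuine gap at exactly the point you flag as ``the main obstacle,'' and it is not a technicality. Your plan is the same framework as the paper's (the random greedy embedding of Mubayi--Verstra\"ete, chain rule for entropy, Jensen at each step), but your proposed way to close it --- that the per-step errors of order $t^2/d$ are ``summable and ultimately absorbable once $d \ge d_0(t)$,'' perhaps helped by a clever peeling order of $T$ --- cannot work as stated. The target bound $n\,d(d-1)\cdots(d-t+1)$ is attained exactly by disjoint unions of $K_{d+1}$, so there is \emph{zero} slack in the inequality: any uncompensated negative error, however small relative to $d$, would push the entropy below $\log(n(d)_t)$ and at best reproduce the Mubayi--Verstra\"ete bound $n(1-O(t^{\cdot}/d^{\cdot}))(d)_t$. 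No choice of BFS/DFS ordering removes the underlying problem, which is that the marginal $P(\phi_i=v)$ genuinely differs from $d(v)/2|E(G)|$ whenever short cycles are present, and this discrepancy enters the conditional entropies with a sign that can be negative.

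The missing idea, which is the heart of the paper's proof, is a \emph{localization and compensation} mechanism: the entropy deficit is split into three terms ($\Pi_i^1$, the Jensen surplus from degree irregularity; $\Pi_i^2$, the surplus coming from the fact that the greedy step has strictly more than $d(v)-i$ choices when previously embedded vertices are non-adjacent to the current image; and $\Pi_i^3$, the potentially negative term measuring the asymmetry $P(\phi^i\in\Gamma_i(v,*))-P(\phi^i\in\Gamma_i(*,v))$). One then shows that all three terms can be rewritten as weighted sums over \emph{non-complete} embeddings only --- the contribution of complete embeddings to $\Pi_i^3$ cancels exactly via the ``twist'' bijection $(u,w_1,\dots,w_k,v)\mapsto(v,w_1,\dots,w_k,u)$, which preserves the measure of an embedding --- and finally a case analysis on the degree ratios $c_u,c_v$ shows that, for $d\ge d_0(t)$, the positive terms $\Pi_i^1+\Pi_i^2$ dominate $\Pi_i^3$ pointwise on non-complete embeddings. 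Equality then forces there to be no non-complete embeddings at all, which (for $\mathrm{diam}(T)\ge 3$) makes every path of length $3$ close into a triangle and hence $G$ a disjoint union of cliques, with equal sizes forced by the Jensen step; your equality sketch is in this spirit but presupposes the tight inequality whose proof is the missing piece. Without the non-complete/twist cancellation (or a substitute for it), your proposal does not yield the lemma.
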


{\bf Organization.} We prove Lemma \ref{weak} by bounding the entropy of a random ``greedy'' embedding of $T$ into $G$ constructed in~\cite{MV}. Analogous entropy arguments have been used to get tight lower bounds for $|\Hom(T, G)|$ and locally injective homomorphisms \cite{AHL}. In these cases, the marginal distribution at each edge of $T$ is a uniform random edge in $G$ and the conclusion is quickly established using Jensen's Inequality. Mubayi and Verstra\"ete~\cite{MV} proved their bound for the injective case by showing the embedding is close to uniform at every edge. We show in subsequent section that the negative error from this is only substantial when $G$ is far from a disjoint union of cliques in which case the other naive estimates are far enough from equality to cancel it out.

\section{Random embedding and entropy} 

In accordance with Lemma \ref{weak}, we will assume that $\delta(G) \geq d/4$. We fix a breadth-first search ordering $(x_0, \cdots, x_t)$ of the vertices in the tree $T$, where $x_0$ is a leaf. We let $T^i$ be the subtree of $T$ on the vertex set $\{x_0, \cdots, x_i\}$. We let $a(i)$ be the index of the anscestor of $x_i$, that is the unique index $j$ for which $x_{j}$ is the parent of $x_i$. For $\gamma \in \mon (T, G)$, we write $\gamma = (\gamma_0, \cdots, \gamma_t)$ to mean $\gamma(x_j) = \gamma_j$. We also let $\gamma^i$ be the restriction of $\gamma$ to $T^i$ so $\gamma^i = (\gamma_0, \cdots, \gamma_i)$. We will choose a random greedy embedding $\phi \in \mon(T,G)$ of $T$ in $G$ as follows according to the following distribution: For $v \in V(G)$, we let $\phi_0 = v$ with probability $d(v)/(2|E|)$. Then for $0 < i \leq t$, $\phi_i = \phi(x_i)$ is chosen uniformly over
\[
N_+ (i) = N(\phi_{a(i)}) - \{\phi_0, \cdots, \phi_{i - 1} \}
\]

\subsection{Entropy}

Our proof of Theorem \ref{main} uses entropy methods. We briefly describe entropy and state its properties that we will use.
If $S$ is a finite set and $X, Y$ are $S$-valued random variable, then the {\em entropy of $X$} is given by
\[
	H[X] = \sum_{y \in \im(X)} P(X = y) \log \left ( \frac{1}{ P(X = y)} \right )
\]
and the {\em conditional entropy} $H[X | Y]$ is given by
\[
	H[X | Y] = E_{y \sim Y} [ H[(X | Y = y)]]
\]
Two of the basic properties of entropy are as follows:

\begin{proposition} 
Let $X,X_1,X_2,\dots,X_k$ be random variables where $X$ has values in $S$ and $X_i$ has values in $S_i$ for $1 \leq i \leq k$. 
\begin{center}
\begin{tabular}{lp{5in}}
$1$. & {\rm [Uniform bound]}  $H[X] \leq \log( |\im(X)| )$ with equality iff $X$ has uniform distribution.\\
$2$. & {\rm [The Chain Rule]} $(X_1,X_2, \cdots, X_k)$ is an $S_1 \times \cdots \times S_k$ random variable and $H[X_1, \cdots, X_k] = H[X_1] + H[X_2 | X_1] + \cdots H[X_k | X_1, \cdots, X_{k - 1} ]$.
 \end{tabular}
 \end{center}
\end{proposition}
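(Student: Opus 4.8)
The plan is to derive both statements from the concavity of the logarithm together with elementary manipulations of probability mass functions. For the uniform bound, write $p(y) = P(X = y)$ and observe that
\[
H[X] = \sum_{y \in \im(X)} p(y)\log\!\left(\frac{1}{p(y)}\right) = \mathbb{E}\big[\log(1/p(X))\big],
\]
where the expectation is over the law of $X$. Since $\log$ is concave, Jensen's inequality gives $\mathbb{E}[\log(1/p(X))] \le \log\big(\mathbb{E}[1/p(X)]\big)$, and $\mathbb{E}[1/p(X)] = \sum_{y \in \im(X)} p(y)\cdot p(y)^{-1} = |\im(X)|$, so $H[X] \le \log|\im(X)|$. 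Because $\log$ is \emph{strictly} concave, equality in Jensen's inequality holds precisely when $1/p(X)$ is almost surely constant, i.e. when $p(y)$ equals $1/|\im(X)|$ for every $y \in \im(X)$; that is, exactly when $X$ is uniform on its image. No division-by-zero issue arises, since the defining sum ranges only over $y$ with $p(y) > 0$.

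For the chain rule I would first establish the two-variable identity $H[X_1, X_2] = H[X_1] + H[X_2 \mid X_1]$ and then induct on $k$. Write $p(x) = P(X_1 = x)$, $p(y \mid x) = P(X_2 = y \mid X_1 = x)$, and $p(x,y) = P(X_1 = x, X_2 = y) = p(x)p(y\mid x)$ for pairs in $\im(X_1, X_2)$. Taking logarithms, $\log(1/p(x,y)) = \log(1/p(x)) + \log(1/p(y\mid x))$; multiplying by $p(x,y)$ and summing over all such pairs splits $H[X_1,X_2]$ into two sums. In the first, summing out $y$ gives $\sum_x p(x)\log(1/p(x)) = H[X_1]$; in the second, grouping by $x$ gives $\sum_x p(x)\sum_y p(y\mid x)\log(1/p(y\mid x)) = \sum_x p(x)\,H[(X_2 \mid X_1 = x)] = H[X_2 \mid X_1]$, which is exactly the definition of conditional entropy; that $(X_1,X_2)$ is an $S_1 \times S_2$-valued random variable is immediate. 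For the inductive step, regard $(X_1, \dots, X_{k-1})$ as a single $(S_1 \times \cdots \times S_{k-1})$-valued random variable $Z$: the two-variable case gives $H[X_1,\dots,X_k] = H[Z, X_k] = H[Z] + H[X_k \mid Z]$, the induction hypothesis expands $H[Z] = H[X_1] + \cdots + H[X_{k-1}\mid X_1,\dots,X_{k-2}]$, and $H[X_k \mid Z]$ is by definition $H[X_k \mid X_1, \dots, X_{k-1}]$; combining yields the claimed formula, with base case $k=1$ the trivial identity $H[X_1] = H[X_1]$.

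There is no substantive obstacle here, both parts being classical, so the only points requiring a little care are the equality analysis in Jensen's inequality (which needs strict concavity of $\log$ and the observation that every summand in $H[X]$ carries positive mass) and the bookkeeping in the inductive step, namely verifying that the conditional-entropy notation $H[X_k \mid X_1, \dots, X_{k-1}]$ appearing in the statement coincides with $H[X_k \mid Z]$ for the tuple $Z = (X_1, \dots, X_{k-1})$. Both are routine once the definitions are unwound.
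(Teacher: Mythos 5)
Your proof is correct. The paper itself states this proposition as a pair of standard entropy facts and offers no proof, so there is nothing to compare against; your argument --- Jensen's inequality applied to $\log(1/p(X))$ with the strict-concavity equality analysis for part 1, and the two-variable identity $H[X_1,X_2]=H[X_1]+H[X_2\mid X_1]$ plus induction for part 2 --- is the canonical proof and is complete, including the correct handling of the equality case (uniformity on $\im(X)$) and the identification of $H[X_k\mid X_1,\dots,X_{k-1}]$ with $H[X_k\mid Z]$ for $Z=(X_1,\dots,X_{k-1})$.
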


\subsection{The Model Proof} Let us begin by showing how to prove Theorem \ref{main} under the assumption that $P(\phi_i = v) = \deg(v)/2|E(G)|$ for every $v \in V(G)$, which is true in graphs which have e.g. girth at least $t$. In general, however, this is false, but as we will see the quantities $P(\phi_i = v)$ and $\deg(v)/2|E(G)|$ are very close together and the real proof of Theorem \ref{main} is an adaptation of the following proof to deal with this small error.

\medskip 

We compute the entropy of $\phi$. By the chain rule,
\[
H[ \phi ] = H[\phi^1] + \sum_{i = 1}^{t - 1} H[\phi^{i + 1} | \phi^i]
\]
Since $\phi^1$ is uniform over all edges, $H[\phi^1] = \log(2 |E(G)|) = \log(nd)$ and 
\[
H[\phi^{i + 1} | \phi^i] = \sum_{v \in V} \frac{d(v)}{2 |E(G)|} \log ( n_+(v) ) \geq \sum_{v \in V} \frac{d(v)}{2 |E(G)|} \log ( d(v) - i ) \geq \log(d - i)
\]
where the second inequality is Jensen's Inequality aplied to the function $x \log(x - i)$. This gives $ \log( |\mon(T, G) | ) \geq H[ \phi ] \geq \log(n d(d - 1) \cdots (d - t + 1))$, and exponentiating proves Theorem \ref{main}.

\section{Notation and Setup}

We will be constructing an embedding $\gamma \in \mon (T^i, G)$ one vertex at a time and when we add the $i$th vertex, we will be thinking about $T^{i - 1}$ as a path from $x_0$ to $x_{a(i)}$ with added branches that are a nuisance to deal with. In light of this, for vertices $u, v \in V(G)$ and path $p = (p_0, \cdots, p_k)$ in $G$, we let
\begin{align*}
	\Gamma_i (u, *) & = \{ \gamma \in \mon (T^i, G) : \gamma(x_0) = u \}\\
	\Gamma_i (*, v) & = \{ \gamma \in \mon (T^i, G) : \gamma(x_{a(i)}) = v \}\\
	\Gamma_i (u, v) & = \{ \gamma \in \mon (T^i, G) : \gamma(x_0) = u, \gamma(x_{a(i)}) = v \}\\
	\Gamma_i (p) & = \{ \gamma \in \mon (T^i, G) : \gamma(x_{a(i + 1)}) = p_k, \gamma(x_{a(a(i + 1))}) = p_{k - 1}, \cdots, \gamma(x_{a^{k + 1}(i + 1)})= p_0, a^{k + 1}(i + 1) = 0 \}
\end{align*}

For a path $p = (p_0, \cdots, p_k)$, we let $\cev{p}$ denote the reverse path $\cev{p} = (p_k, \cdots, p_0)$.

\section{Bounding the Entropy of $\phi$}
Our goal will be to show $H[\phi] \geq \log(n (d)_t)$ when $d \geq d_0$. Just like in the model proof we begin with the chain rule,
\begin{equation*}
H[\phi] = H[\phi^1] + \sum_{i = 1}^{t - 1} H[\phi^{i + 1} | \phi^i]
\end{equation*}
and we note that distribution of $\phi^1$ is uniform over all oriented edges and hence
\[
H[\phi^1] = \log(2 |E(G)|) = \log(n d)
\]
Next, since $P \left (\phi^i \in \gamma^i(v, \cdot)  \right ) = P(\phi_0 = v) = d(v)/2|E|$
\begin{align*}
	H[\phi^{i + 1} | \phi^i ] & =\sum_{\gamma \in \Gamma_i} P \left (\phi^i = \gamma \right ) \log (d(\gamma_{a(i + 1)}) - |N(\gamma_{a(i + 1)} ) \cap \{ \gamma_0, \cdots, \gamma_i \} |)\\
	&= \sum_{v \in V} \left ( \frac{d(v)}{2|E|} - P \left  (\phi^i \in \Gamma(v, *) \right ) + P \left ( \phi^i \in \Gamma( *, v) \right ) \right ) r_i(v)\\
		& = \log( d - i) + \Pi_i^1 + \Pi_i^2 - \Pi_i^3
\end{align*}
where
\begin{align*}
	\Pi_i^1 &= \sum_{v \in V} \left [ \frac{d(v)}{2|E|} \log( d(v) - i ) \right ] - \log(d - i) \\
	\Pi_i^2 &= \sum_{v \in V} \frac{d(v)}{2|E|} \left [ r_i(v) - \log (d(v) - i) \right ]\\
	\Pi_i^3 &= \sum_{v \in V} \left [ P \left (\phi^i \in \Gamma(v, *) \right ) - P(\phi^i \in \Gamma(*, v) ) \right ] r_i(v)
\end{align*}
Our goal will be to bound these terms. The second two are only large when there are not too many edges between vertices in a typical embedding of $T$. To encapsulate this we introduce the following technical definition.
\begin{definition}
We say $\gamma \in \mon(T^i, G)$ is \textit{complete} if $\{\gamma_0, \gamma_j \}, \{\gamma_{a(i + 1)}, \gamma_j \} \in E$ whenever $0 < j < a(i + 1)$ and $x_j$ is not a leaf in $T^{a(i + 1)}$. We say $\gamma$ is non-complete if it is not complete. We let $\Gamma_i^{nc}$ be the set of all non-complete elements of $\mon(T^i, G)$ and let
\begin{align*}
	\Gamma_i^{nc} (u, *) & = \Gamma_i(u, *) \cap \Gamma_i^{nc}\\
	\Gamma_i^{nc} (*, v) & = \Gamma_i( *, v) \cap \Gamma_i^{nc}\\
	\Gamma_i^{nc} (u, v) & = \Gamma_i(u, v) \cap \Gamma_i^{nc}\\
	\Gamma_i^{nc} (p) & = \Gamma_i(p) \cap \Gamma_i^{nc}\\
\end{align*}
\end{definition}

The next lemmas, which are the heart of the proof of Theorem \ref{main}, say that we can write the error terms as weighted sums over non-complete trees. 
\medskip
\begin{lemma} {\label{p1_bound}} For a vertex $v \in V(G)$, let $c_v := d(v)/d$ 
and $f(x) = \log(\frac{xd - i}{d - i}) - \frac{d(x - 1)}{x(d - i)}$. Then
\begin{equation*}
	\Pi_i^1 \geq \sum_{u, v \in V} P\left (\phi^i \in \Gamma_i^{nc}(u, v) \right ) \frac{1}{8}( f(c_u) + f(c_v))
\end{equation*}
and moreover 
\[
\frac{1}{c_{v}} \left [ c_{v} \log \left ( \frac{c_{v} d - i}{d - i} \right ) - (c_{v} - 1) \frac{d}{d - i} \right ]
\]
is non-negative, strictly increasing in $c_v$ when $c_v \geq 1$, and, as long as $d$ is sufficiently large, strictly decreasing in $c_v$ when $i/(d - i)  \leq c_v \leq 1$.
\end{lemma}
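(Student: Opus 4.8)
The plan is to first collapse $\Pi_i^1$ into a single weighted sum of the numbers $f(c_v)$, then to establish the calculus facts about $f$ (which in particular give $f(c_v)\ge 0$ for all $v$), and finally to derive the displayed inequality by comparing the weights $c_v/n$ against the probabilities that $\phi^i$ is non-complete. For the collapse, write $2|E|=nd$ and $d(v)=c_vd$, so $d(v)/2|E|=c_v/n$ and $\sum_{v\in V}c_v=n$, hence $\sum_{v\in V}(c_v-1)=0$. Since $f(x)=\log\!\big(\tfrac{xd-i}{d-i}\big)-\tfrac{d(x-1)}{x(d-i)}$,
\[
\Pi_i^1=\sum_{v\in V}\frac{c_v}{n}\log\!\Big(\frac{c_vd-i}{d-i}\Big)=\sum_{v\in V}\frac{c_v}{n}\Big(f(c_v)+\frac{d(c_v-1)}{c_v(d-i)}\Big)=\sum_{v\in V}\frac{c_v}{n}\,f(c_v),
\]
the last equality because $\sum_{v\in V}\frac{c_v}{n}\cdot\frac{d(c_v-1)}{c_v(d-i)}=\frac{d}{n(d-i)}\sum_{v\in V}(c_v-1)=0$.

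For the ``moreover'' assertions, note that the expression displayed in the statement equals $f(c_v)$, and that $f(1)=0$. Differentiating,
\[
f'(x)=\frac{d}{xd-i}-\frac{d}{(d-i)x^2}=\frac{d\big((d-i)x^2-dx+i\big)}{(xd-i)(d-i)x^2}=\frac{d\,(x-1)\big(x-\tfrac{i}{d-i}\big)}{(xd-i)\,x^2},
\]
since $(d-i)x^2-dx+i$ has roots $1$ and $i/(d-i)$, and $d\ge d_0(t)$ is large enough that $i/(d-i)<1$. On $\{x\ge i/(d-i)\}$ one has $xd-i>0$, so $f'(x)$ has the sign of $(x-1)\big(x-\tfrac{i}{d-i}\big)$: negative on $(i/(d-i),1)$, positive on $(1,\infty)$. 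Hence $f$ is strictly decreasing on $[i/(d-i),1]$, strictly increasing on $[1,\infty)$, so $1$ is a global minimiser on $[i/(d-i),\infty)$ and $f\ge f(1)=0$ there. Since $\delta(G)\ge d/4$ forces $c_v=d(v)/d\ge\tfrac14\ge i/(d-i)$ once $d\ge d_0(t)$, every term satisfies $f(c_v)\ge 0$.

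For the main inequality, expand the right-hand side. For fixed $u$ the sets $\Gamma_i^{nc}(u,v)$ are pairwise disjoint over $v$ with union $\Gamma_i^{nc}(u,*)$, and symmetrically for fixed $v$, so
\[
\frac18\sum_{u,v\in V}P\big(\phi^i\in\Gamma_i^{nc}(u,v)\big)\big(f(c_u)+f(c_v)\big)=\frac18\sum_{u\in V}f(c_u)\,P\big(\phi^i\in\Gamma_i^{nc}(u,*)\big)+\frac18\sum_{v\in V}f(c_v)\,P\big(\phi^i\in\Gamma_i^{nc}(*,v)\big).
\]
From $\Gamma_i^{nc}(u,*)\subseteq\Gamma_i(u,*)$ we get $P(\phi^i\in\Gamma_i^{nc}(u,*))\le P(\phi_0=u)=c_u/n$, and from $\Gamma_i^{nc}(*,v)\subseteq\Gamma_i(*,v)$ we get $P(\phi^i\in\Gamma_i^{nc}(*,v))\le P(\phi_{a(i)}=v)$. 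Using $f(c_v)\ge 0$ and the marginal bound $P(\phi_{a(i)}=v)\le 7\,d(v)/2|E|=7c_v/n$, the right-hand side is at most $\frac18\big(\sum_u f(c_u)\tfrac{c_u}{n}+7\sum_v f(c_v)\tfrac{c_v}{n}\big)=\sum_{v\in V}\tfrac{c_v}{n}f(c_v)=\Pi_i^1$, proving the lemma.

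The only nontrivial ingredient --- and essentially the only place the magnitude of $d_0(t)$ is used --- is the marginal bound $P(\phi_j=v)\le 7\,d(v)/2|E|$ for every coordinate $j$ of the greedy embedding. I would prove it by coupling $\phi$ with the stationary branching walk that picks its root $v$ with probability $d(v)/2|E|$ and thereafter moves from any vertex to a uniformly random neighbour with repetitions allowed: reversibility makes that walk's marginal exactly $d(v)/2|E|$ at every tree-vertex, while $\delta(G)\ge d/4$ means each conditional step of $\phi$ forfeits a multiplicative factor of at most $(1-4t/d)^{-1}$ relative to it, so $P(\phi_j=v)\le(1-4t/d)^{-t}\,d(v)/2|E|\le 3\,d(v)/2|E|$ once $d\ge d_0(t)$. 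Promoting additive closeness of $P(\phi_j=v)$ to $d(v)/2|E|$ into this multiplicative bound, uniformly over graphs $G$ that may be far larger than $d$, is the main obstacle; the rest is the bookkeeping and elementary calculus above.
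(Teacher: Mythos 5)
Your proposal is correct, but it is arranged differently from the paper's proof, and the comparison is worth making. The identity $\Pi_i^1=\sum_{v}\frac{c_v}{n}f(c_v)$ and the sign/monotonicity analysis of $f$ coincide with what the paper gets from Lemma \ref{jensens_bound} and its brief convexity remark (your explicit factorization $f'(x)=\frac{d(x-1)(x-i/(d-i))}{(xd-i)x^2}$ is actually more complete than the paper's one-line justification, and you correctly note that $\delta(G)\ge d/4$ puts every $c_v$ in the region where $f\ge 0$). The divergence is in the main inequality: the paper lower-bounds $\Pi_i^1$ by distributing the mass of each vertex $v$ over embeddings with $\phi_0=v$, symmetrizing in $(u,v)\leftrightarrow(v,u)$, invoking Corollary \ref{cor} (near-equality of $P(\phi^i\in\Gamma_i(u,v))$ and $P(\phi^i\in\Gamma_i(v,u))$, proved via path reversal in Lemma \ref{reverse_bound}), and only then discarding complete embeddings by nonnegativity; you instead upper-bound the right-hand side directly, collapsing it to the marginals $P(\phi^i\in\Gamma_i^{nc}(u,*))\le c_u/n$ and $P(\phi^i\in\Gamma_i^{nc}(*,v))\le 7c_v/n$ and comparing with $\sum_v\frac{c_v}{n}f(c_v)$. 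The one ingredient you must supply, the attachment-vertex marginal bound $P(\phi_j=v)\le O(1)\,d(v)/2|E(G)|$, is correct as you sketch it: each conditional step of the greedy injective embedding exceeds the corresponding step of the uniform homomorphism model by a factor at most $(1-4t/d)^{-1}$ (at most $t$ forbidden vertices, $\delta(G)\ge d/4$), the homomorphism model has exact marginal $d(v)/2|E(G)|$ at every tree vertex, and summing the trajectory-wise comparison over injective trajectories ending at $v$ gives $P(\phi_j=v)\le(1-4t/d)^{-t}d(v)/2|E(G)|\le 3\,d(v)/2|E(G)|$ for $d$ large; no ``additive-to-multiplicative promotion'' is required, so the obstacle you flag in your last sentence is not actually there. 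Note also that this marginal bound is precisely the upper direction of inequality (\ref{cor_2}) in Corollary \ref{cor}, which the paper needs anyway for Lemmas \ref{p2_bound} and \ref{p3_bound}, so in the context of the whole paper you could simply cite it and skip the comparison argument. What your arrangement buys is a shorter, one-sided and more self-contained proof of this particular lemma (with cruder constants, which the $1/8$ absorbs); what the paper's arrangement buys is that a single reversal/twist machinery is reused uniformly across all three error terms $\Pi_i^1,\Pi_i^2,\Pi_i^3$.
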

\begin{lemma} {\label{p2_bound}}
\begin{equation*}
	\sum_{i = 1}^{t - 1} \Pi_i^2 \geq\sum_{i = 1}^{t-1} \sum_{u, v \in V} P \left (\phi^i \in \Gamma_i^{nc}(u, v) \right )  \frac{1}{8t} \min \left \{ \frac{1}{d(u)}, \frac{1}{d(v)} \right\}
\end{equation*}
with equality only if $\{u, v\} \in E(G)$ whenever $u, v$ have distance less than the diameter of $T$.
\end{lemma}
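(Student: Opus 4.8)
The plan is to pass from the entropy quantity $\Pi_i^2$ to an expectation of a ``local slack'' at step $i$, bound that slack from below edge by edge, and then redistribute the slack across steps so that every non-complete partial embedding is paid for. The point is that $n_+(\gamma)=d(\gamma_{a(i+1)})-e_i(\gamma)$ with $e_i(\gamma):=|N(\gamma_{a(i+1)})\cap\{\gamma_0,\dots,\gamma_i\}|$, and $n_+(\gamma)$ exceeds its ``ideal'' value $d(\gamma_{a(i+1)})-i$ by exactly the number of ``missing edges at the current frontier''; a non-complete $\gamma$ witnesses such a missing edge, possibly at $\gamma_0$ rather than at the frontier, so the compensating slack must be tracked to an earlier step.

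First I would rewrite $\Pi_i^2$ as a genuine expectation. Since $r_i(v)-\log(d(v)-i)\ge 0$ term by term (the greedy rule always leaves at least $d(v)-i$ choices) and, by the Mubayi--Verstra\"ete estimate, $d(v)/2|E|$ is within a constant factor of the marginal $P(\phi^i\in\Gamma_i(*,v))$, one gets $\Pi_i^2\ge c\,\mathbb E[\log(d(\phi_{a(i+1)})-e_i(\phi^i))-\log(d(\phi_{a(i+1)})-i)]$ for an absolute $c>0$, the expectation being over the greedy embedding. Since $i-e_i(\gamma)\le t$ is small compared with $d(v)-i\gtrsim d$, the bound $\log(1+x)\ge x/2$ yields
\[
\sum_{i=1}^{t-1}\Pi_i^2 \;\ge\; \frac{c}{2}\,\mathbb E\!\left[\sum_{i=1}^{t-1}\frac{i-e_i(\phi^i)}{d(\phi_{a(i+1)})}\right],
\]
where $i-e_i(\gamma)$ counts the $j\le i$ with $x_j\neq x_{a(i+1)}$ and $\gamma_j\notin N(\gamma_{a(i+1)})$.

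Next I would fix a realization $\gamma$ and show the right-hand sum dominates $\tfrac1{8t}\sum_{i:\,\gamma^i\ \mathrm{non\text{-}complete}}\min\{1/d(\gamma_0),1/d(\gamma_{a(i+1)})\}$. For each such $i$ choose a witness $x_j$ (with $0<j<a(i+1)$, $x_j$ not a leaf of $T^{a(i+1)}$) at which a required edge is absent. If $\gamma_{a(i+1)}\not\sim\gamma_j$, this missing edge already appears in $i-e_i(\gamma)$, so step $i$ contributes at least $1/d(\gamma_{a(i+1)})\ge\min\{\cdots\}$. If instead $\gamma_{a(i+1)}\sim\gamma_j$ but $\gamma_0\not\sim\gamma_j$, let $g(i)$ be the step just before the first child of $x_j$ is embedded: at that step the frontier is $\gamma_j$, $\gamma_0$ is already placed, and $\gamma_0\notin N(\gamma_j)$, so $g(i)-e_{g(i)}(\gamma)\ge 1$ and step $g(i)$ contributes at least $1/d(\gamma_j)$. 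Since there are at most $t$ steps, each step is named as $g(i)$ for at most $t$ indices $i$; summing, dividing by $t$, and absorbing $c/2$ produces the factor $\tfrac1{8t}$.

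The main obstacle is precisely this last charging step when $\gamma_0\not\sim\gamma_j$: the slack one locates sits at step $g(i)$ and has size of order $1/d(\gamma_j)$, whereas the target term is $1/\max\{d(\gamma_0),d(\gamma_{a(i+1)})\}$, and a priori the image $\gamma_j$ of a witness could have much larger degree. The resolution is to pick the witness carefully: walk along the path in $T$ from $x_0$ toward $x_{a(i+1)}$ and take the first vertex $x_j$ with $\gamma_0\not\sim\gamma_j$ (so $\gamma_0\sim\gamma_{a(j)}$), then locate the compensating slack at the step that places $\gamma_j$ as a neighbour of $\gamma_{a(j)}$, using that $\gamma_0$ is already placed there and is not adjacent to $\gamma_j$; keeping track that each step is charged $O(t)$ times is where the factor $t$ in $\tfrac1{8t}$, and ultimately the $t^4$-type value of $d_0(t)$, enters. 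For the equality statement one notes that every inequality above is strict unless there is no non-complete $\gamma$ of positive probability and $n_+(\gamma)=d(\gamma_{a(i+1)})-i$ for every $\gamma$ of positive probability; unwinding the definition of completeness together with $\Pi_i^2=0$ over all steps $i$ then forces $\{u,v\}\in E(G)$ for every pair of images $u,v$ of tree vertices at tree-distance less than $\mathrm{diam}(T)$, as claimed.
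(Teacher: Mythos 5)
Your first two steps (rewriting $\Pi_i^2$ as an expectation of the log-ratio $\log\frac{d(u)-e_i}{d(u)-i}$ after comparing $d(v)/2|E|$ with the marginal at the parent position, then linearizing the logarithm so that the slack at step $i$ is the number of already-embedded vertices not adjacent to the frontier, divided by the frontier's degree) match the paper's opening moves. The gap is in the charging step, and it is exactly at the point you flag as ``the main obstacle'': your argument is pointwise in the realization $\gamma$, and in the case where the missing edge is $\{\gamma_0,\gamma_j\}$ (root side) the only realized slack sits at the step whose frontier is $\gamma_j$ and is of size about $1/d(\gamma_j)$, while the term you must pay is $\frac{1}{8t}\min\{1/d(\gamma_0),1/d(\gamma_{a(i+1)})\}$. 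Nothing controls $d(\gamma_j)$ from above (only $\delta(G)\geq d/4$ from below), so the pointwise inequality is simply false: take $T$ a path of length $4$, an essentially $d$-regular graph with one hub of huge degree, and an embedding in which $\gamma_2$ is the hub, $\gamma_0\not\sim\gamma_2$, but all other required adjacencies hold; the realized slack is $\approx 1/d(\gamma_2)$, far below $\frac{1}{8t}\cdot\frac{1}{d}$. Your proposed resolution does not repair this: choosing the \emph{first} witness $x_j$ along the path gives no bound on $d(\gamma_j)$ (nor on $d(\gamma_{a(j)})$), and the claim that the missing edge $\{\gamma_0,\gamma_j\}$ creates slack ``at the step that places $\gamma_j$'' is wrong — the number of available choices at that step is governed by adjacencies to the frontier $\gamma_{a(j)}$, not to $\gamma_j$, so that step sees no deficiency from $\gamma_0\not\sim\gamma_j$.

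What is missing is the distributional, not pointwise, mechanism the paper uses: the approximate reversal symmetry of the greedy embedding (Lemma \ref{reverse_bound} and Corollary \ref{cor}, proved by comparing a path-embedding with its reverse, and the twist/restriction bookkeeping with the $1/(t-j)$ factors). For a non-adjacent ordered pair, the probability mass of embeddings \emph{rooted} at $u$ whose image contains $v$ is comparable, up to $1\pm 8t^2/d$, to the mass of embeddings in which $u$ occupies the parent/frontier position; for those reversed embeddings the slack is realized with the correct denominator $d(u)$, which is how the paper obtains the charge $\min\{1/d(u),1/d(v)\}$ for every non-complete embedding with root and parent $u,v$, at the cost of the factor $1/t$. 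Without passing to this symmetry of the measure (or some substitute argument that transfers root-side deficiencies into frontier-side slack in expectation), the per-realization charging cannot yield the stated bound, so the proposal as written has a genuine gap. The equality discussion at the end inherits the same problem, since it is derived from the flawed charging.
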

\begin{lemma} {\label{p3_bound}}
\begin{equation*}
	\Pi_i^3 \leq \sum_{u, v \in V} P \left (\phi^i \in \Gamma_i^{nc}(u, v) \right ) \frac{8 t^2}{d} \left [ \left | \log \left ( \frac{d(u)}{d(v)} \right ) \right | + \frac{8t}{d} \right ]
\end{equation*}
\end{lemma}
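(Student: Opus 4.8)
The plan is to exploit the reversal symmetry $p\mapsto\cev p$. First I would rewrite $\Pi_i^3$. Using $P(\phi^i\in\Gamma_i(v,*))=P(\phi_0=v)=d(v)/2|E|$ and $P(\phi^i\in\Gamma_i(*,v))=P(\phi_{a(i+1)}=v)$, together with the fact that these are two probability distributions on $V(G)$,
\[
\Pi_i^3=\sum_{v\in V}\bigl(P(\phi_0=v)-P(\phi_{a(i+1)}=v)\bigr)\,r_i(v),
\]
a sum whose coefficients sum to $0$. Put $k:=\mathrm{dist}_{T^i}(x_0,x_{a(i+1)})$; if $k=0$ the two distributions coincide and $\Pi_i^3=0$, so assume $k\geq1$. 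Partitioning $\Gamma_i(v,*)$ and $\Gamma_i(*,v)$ according to the image $p=(p_0,\dots,p_k)$ of the $x_0$–$x_{a(i+1)}$ spine — necessarily a length-$k$ path in $G$ as $\phi$ is injective — and using that $p\mapsto\cev p$ is a bijection between spine images starting at $v$ and those ending at $v$, one gets
\[
P(\phi_0=v)-P(\phi_{a(i+1)}=v)=\sum_{p\,:\,p_0=v}\bigl(P(\phi^i\in\Gamma_i(p))-P(\phi^i\in\Gamma_i(\cev p))\bigr),
\]
and hence, summing over $v$ and pairing each $p$ with $\cev p$,
\[
\Pi_i^3=\tfrac12\sum_p\bigl(r_i(p_0)-r_i(p_k)\bigr)\bigl(P(\phi^i\in\Gamma_i(p))-P(\phi^i\in\Gamma_i(\cev p))\bigr).
\]

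I would then bound the two factors. For the first: whenever $\gamma\in\Gamma_i$ has $\gamma_{a(i+1)}=v$ one has $d(v)-i\leq n_+(\gamma)\leq d(v)$, so $\log(d(v)-i)\leq r_i(v)\leq\log d(v)$, and since $\delta(G)\geq d/4$, $i\leq t$, and $d\geq d_0(t)$ this gives $\bigl|r_i(p_0)-r_i(p_k)\bigr|\leq\bigl|\log(d(p_0)/d(p_k))\bigr|+8t/d$. For the second, I would work from the explicit product formula $P(\phi^i=\gamma)=\tfrac{1}{2|E|}\prod_{j=2}^i n_+(\gamma^{j-1})^{-1}$, where $n_+(\gamma^{j-1})=|N(\gamma_{a(j)})\setminus\{\gamma_0,\dots,\gamma_{j-1}\}|\geq\delta(G)-t\geq d/8$, and compare $P(\Gamma_i(p))$ with $P(\Gamma_i(\cev p))$ by matching $\gamma\in\Gamma_i(p)$ with the embedding $\gamma'\in\Gamma_i(\cev p)$ obtained by reversing the spine image and exchanging the roles of $p_0$ and $p_k$ on the off-spine branches (re-embedding greedily any branch that must move). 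Under this matching each factor $n_+$ changes only through ``collisions'' — chords of $p$, and incidences between the spine and earlier-placed vertices — of which there are at most $t^2+t$ in total; at each of the $\leq t$ steps the two orderings differ in at most $t$ placed vertices, altering a factor of size $\geq d/8$ by at most $t$, so summing the per-step multiplicative errors of size $O(t/d)$ yields $|P(\Gamma_i(p))/P(\Gamma_i(\cev p))-1|\leq 16t^2/d$, and the same matching, being completeness-preserving, gives the analogous bound for the restrictions $P(\Gamma_i^{nc}(p))$ versus $P(\Gamma_i^{nc}(\cev p))$.

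It remains to localise the discrepancy to non-complete embeddings, and this is where the definition of ``complete'' enters. For a \emph{complete} $\gamma\in\Gamma_i(p)$ the adjacencies $\{\gamma_0,\gamma_j\},\{\gamma_{a(i+1)},\gamma_j\}\in E$ make the spine/branch swap legitimate and cancel every collision term above except ``degree-fluctuation'' ones, which vanish identically when $G$ is a disjoint union of cliques. Writing $P(\Gamma_i(p))=P(\Gamma_i^{nc}(p))+P(\Gamma_i^c(p))$, substituting the factor bounds into the last display, pairing $p$ with $\cev p$, and using $\sum_{p\,:\,p_0=u,\,p_k=v}P(\phi^i\in\Gamma_i^{nc}(p))=P(\phi^i\in\Gamma_i^{nc}(u,v))$, the non-complete part already contributes a bound of the asserted shape
\[
\frac{8t^2}{d}\sum_{u,v\in V}P(\phi^i\in\Gamma_i^{nc}(u,v))\Bigl(\bigl|\log(d(u)/d(v))\bigr|+\tfrac{8t}{d}\Bigr)
\]
(with constants tracked carefully). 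The residual contribution of the complete parts is nonzero in general — as one already sees for $T$ a short path and $G$ not a disjoint union of cliques — but is controlled by degree fluctuations among vertices that completeness forces to be mutually adjacent, and must be shown to be dominated by the same right-hand side.

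I expect the obstacle to be precisely this last point: verifying that the reversal/swap matching between $\Gamma_i(p)$ and $\Gamma_i(\cev p)$ is well defined and measure preserving on complete embeddings up to the asserted $O(t^2/d)$ error, and then absorbing the leftover complete contribution into the non-complete right-hand side. The bookkeeping is intricate — one must separate spine chords from spine–branch incidences, handle branches attached at $x_{a(i+1)}$ (where the naive swap of $p_0$ and $p_k$ requires care unless $x_{i+1}$ is the first child of $x_{a(i+1)}$), and ultimately organise the argument by the position of $x_{a(i+1)}$ in the breadth-first order.
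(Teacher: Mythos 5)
Your overall route (antisymmetrize $\Pi_i^3$ over reversed spine images, bound $|r_i(p_0)-r_i(p_k)|$ by $|\log(d(p_0)/d(p_k))|+8t/d$, and bound the reversal discrepancy $|P(\phi^i\in\Gamma_i(p))/P(\phi^i\in\Gamma_i(\cev p))-1|=O(t^2/d)$) is exactly the paper's route — the last estimate is the paper's Lemma \ref{reverse_bound} and its aggregated forms. But there is a genuine gap, and it is precisely the point you flag at the end and leave unresolved: how the sum gets restricted to \emph{non-complete} embeddings. The paper does this with an exact identity (Lemma \ref{twist}): for every $u,v$, $P(\phi^i\in\Gamma_i(v,u))-P(\phi^i\in\Gamma_i(u,v))=P(\phi^i\in\Gamma_i^{nc}(v,u))-P(\phi^i\in\Gamma_i^{nc}(u,v))$. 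The complete contribution cancels \emph{identically}, not up to $O(t^2/d)$: for a complete $\gamma$ (restricted to $T^{a(i+1)}$) the twist $\tilde\gamma=(\gamma_{a(i+1)},\gamma_1,\dots,\gamma_{a(i+1)-1},\gamma_0)$ keeps every non-endpoint image fixed, is a valid embedding by completeness, and has exactly the same probability, because the endpoint degree cancels in the first step ($\tfrac{d(\gamma_0)}{2|E|}\cdot\tfrac{1}{d(\gamma_0)}=\tfrac{1}{2|E|}$, independent of which endpoint comes first) and completeness guarantees each anchor $\gamma_{a(j)}$ is adjacent to both $\gamma_0$ and $\gamma_{a(i+1)}$, so every exclusion count $|N(\gamma_{a(j)})\cap\{\text{previously placed}\}|$ is unchanged. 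No branch re-embedding is needed, and there are no residual ``degree-fluctuation'' terms.

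Your fallback plan — bound the complete part approximately and ``absorb'' it into the non-complete right-hand side — cannot work in general: the complete mass can be large while $P(\phi^i\in\Gamma_i^{nc}(u,v))$ is zero or negligible (e.g.\ $G$ a disjoint union of cliques), in which case there is nothing to absorb into, yet the lemma's right-hand side is $0$ and the inequality must still hold. The restriction to $\Gamma_i^{nc}$ is not cosmetic; it is what allows the later cancellation against $\Sigma^1_{u,v,i}$ and $\Sigma^2_{u,v,i}$ in Claim \ref{main_claim}, which are likewise weighted by $P(\phi^i\in\Gamma_i^{nc}(u,v))$. With the exact twist identity in hand, the remaining steps of your sketch (the $r_i$ bound and the paper's Lemma \ref{reverse_bound_2} giving $|P(\phi^i\in\Gamma_i^{nc}(v,u))-P(\phi^i\in\Gamma_i^{nc}(u,v))|\le\tfrac{8t^2}{d}P(\phi^i\in\Gamma_i^{nc}(v,u))$) complete the proof as in the paper.
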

The full proof of these lemmas is delayed until the next section but let us discuss very briefly why we ought to be able to write these error terms as weighted sums over non-complete trees in the case that $T$ is a path.

\medskip

Let us start with $\Pi_i^3$: 
\[
P \left ( \phi^i \in \Gamma(v, *) \right ) - P \left (\phi^i \in \Gamma(*, v) \right ) = \sum_{u \in V(G)} [ P \left (\phi^i \in \Gamma(v, u) \right ) - P \left (\phi^i \in \Gamma(u, v) \right ) ]
\]
The quantity $P \left ( \phi^i \in \Gamma(v, u) \right ) - P \left (\phi^i \in \Gamma(u, v) \right )$ is the sum of the measure (with respect to the distribution $\phi^i$) of paths of length $i$ from $u$ to $v$ minus the measure of paths of length $i$ from $v$ to $u$. We get cancellation over complete paths by pairing every complete path with its {\em twist} which is defined as follows.
\begin{definition} Let $p = (u_, w_1, \cdots, w_k, v)$ be a path. The \textit{twist} of $p$, denoted $\tilde{p}$, is the path  $\tilde{p} = (v, w_1, \cdots, w_k, u)$.
\end{definition}
\begin{tikzpicture}
\node[circle] (A) at (0, 0) {$u$};
\node[ circle] (B) at (0, 1) {$w_1$};
\node[ circle] (C) at (0, 2) {$w_2$};
\node[ circle] (D) at (0, 3) {$w_3$};
\node[ circle] (E) at (0, 4) {$v$};

\node[align = flush center, text width = 8cm] (Label) at (0, -1)
{
Path $p = (u, w_1, w_2, w_3, v)$
};

\path[red, ->]  (A) edge (B);
\path[red, ->]  (B) edge (C);
\path[red, ->]  (C) edge (D);
\path[red, ->]  (D) edge (E);

\path  (A) edge[bend left = 60] (C);
\path  (A) edge[bend left = 80] (D);

\path  (C) edge[bend right = 60] (E);
\path  (B) edge[bend right = 80] (E);

\node[circle] (A1) at (6, 0) {$u$};
\node[ circle] (B1) at (6, 1) {$w_1$};
\node[ circle] (C1) at (6, 2) {$w_2$};
\node[ circle] (D1) at (6, 3) {$w_3$};
\node[ circle] (E1) at (6, 4) {$v$};

\node[align = flush center, text width = 8cm] (Label2) at (6, -1)
{
Twist $\tilde p = (v, w_1, w_2, w_3, u)$
};

\path (A1) edge (B1);
\path[red, ->]  (B1) edge (C1);
\path[red, ->]  (C1) edge (D1);
\path  (D1) edge (E1);

\path  (A1) edge[bend left = 60] (C1);
\path[red, ->]  (D1) edge[bend right = 80] (A1);

\path  (C1) edge[bend right = 60] (E1);
\path[red, ->]  (E1) edge[bend left = 80] (B1);
\end{tikzpicture}

A path has the same measure as its twist so this pairing cancels all complete paths.

\medskip

For $\Pi_i^2$, the main idea is that if $p$ is not a complete path, then either $n_+(i) > d(v) - i$ when $\phi^i = p$ or $n_+(i) > d(v) - i$ when $\phi^i = \cev p$. In either case, we get a contribution to $\Sigma_i$.
\medskip

Finally, $\Pi_i^3$ is a technical term that only comes into play when there is unusually large variance in the degrees in $G$. There is no deeper reason for writing it in terms of non-complete paths beyond the fact that it is convenient and Lemma \ref{p1_bound} remains true if we replace $\Gamma^{nc}(u, v)$ with $\Gamma(u, v)$.

\medskip

\subsection{Proof of Lemma \ref{weak}} By Lemmas $\ref{p1_bound}$, $\ref{p2_bound}$, and $\ref{p3_bound}$ we have
\[
H[\phi] \geq \log(n (d)_t) + \sum_{i = 1}^{t - 1} \sum_{u, v \in V(G)} P \left (\phi^i \in \Gamma_i^{nc} (u, v) \right ) ( \Sigma_{u, v, i}^1 + \Sigma_{u, v, i}^2 - \Sigma_{u, v, i}^3 )
\]
where
\begin{align*}
	\Sigma_{u, v, i}^1 & = \frac{1}{8}\left ( \frac{1}{c_v} \left [ c_{v} \log \left ( \frac{c_{v} d - i}{d - i} \right ) - (c_{v} - 1) \frac{d}{d - i} \right ] + \frac{1}{c_{u}} \left [ c_{u} \log \left ( \frac{c_{u} d - i}{d - i} \right ) - (c_{u} - 1) \frac{d}{d - i} \right ] \right ) \\
\Sigma_{u, v, i}^2 & = \frac{1}{8t} \min \left \{ \frac{1}{d(u)}, \frac{1}{d(v)} \right \}\\
\Sigma_{u, v, i}^3 &= \frac{8t^2}{d} \left [ \left | \log \left ( \frac{d(u)}{d(v)} \right ) \right | + \frac{8t}{d} \right ]
\end{align*}
and moreover each term in $\Sigma_{u, v, i}^1$ is non-negative.






\begin{claim} {\label{main_claim}} There exists a $d_0(t) \geq 0$ such that if $d \geq d_0(t)$, then 
\[
\Sigma_{u, v, i}^1 + \Sigma_{u, v, i}^2 - \Sigma_{u, v, i}^3 \geq 0
\]

\begin{proof}

Let $\vep := 1/( 2^8 t^3)$. We split into three cases:
\begin{enumerate}
\item $\max \{c_v, c_u, 1/c_u, 1/c_v \} < 1 + \vep$
\item  $\vep + 1 \leq \max \{c_v, c_u, 1/c_u, 1/c_v \} \leq d$
\item $ \max \{c_v, c_u, 1/c_u, 1/c_v \} > d$
\end{enumerate}
First suppose that $ \max \{c_v, c_u, 1/c_u, 1/c_v \} < 1 + \vep$.  Then,
\begin{align*}
\Sigma_{u, v, i}^1 + \Sigma_{u, v, i}^2 - \Sigma_{u, v, i}^3 & \geq \Sigma_{u, v, i}^2 - \Sigma_{u, v, i}^3 \\
& \geq  \frac{1}{8t d(1 + \vep ) }- \frac{8 t^2}{d} \left [ \log \left (  (1 + \vep)^2 \right ) + \frac{8 t}{d} \right ]\\
& \geq   \frac{1}{9 td } - \frac{8^2 t^3}{d^2} - \frac{1 }{16 t d}
\end{align*}
which is positive as long as $d$ is sufficiently large.

\medskip

 Now suppose that $ 1 + \vep  \leq \max \{c_v, c_u, 1/c_u, 1/c_v \} \leq d$. Let 
\[
H_d(x) = \frac{1}{8}\left ( \frac{1}{x} \left [ x \log \left ( \frac{ x d - i}{d - i} \right ) - ( x - 1) \frac{d}{d - i}  \right ] \right )
\]
and let $h_d = \min \{ H_d( 1+ \vep), H_d \left ( 1/(1 + \vep)  \right )$. By Lemma \ref{p1_bound}, $\Sigma_{u, v, i}^1 \geq 2 h_d > 0$. Moreover, since the function $x \log x$ is convex,
\[
\lim_{d \rightarrow \infty}  h_d = \min \left \{  \frac{1}{8}   \left ( \log (1 + \vep)  - \frac{ 1 + \vep - 1}{1 + \vep}  \right ), \frac{1}{8} \left ( \log \left ( \frac{1}{1 + \vep} \right  ) - \frac{ 1/(1 + \vep) - 1}{1/(1 + \vep) } \right ) \right \} > 0
\] 
Therefore there exists a constant $K$ such that for all sufficiently large $d$, $\Sigma_{u, v, i}^1 > K$ and
\begin{align*}
\Sigma_{u, v, i}^1 + \Sigma_{u, v, i}^2 - \Sigma_{u, v, i}^3 & \geq \Sigma_{u, v, i}^1 - \Sigma_{u, v, i}^3 \\
& \geq  K - \frac{8 t^2}{d} \left [ \log \left ( 8d  \right ) + \frac{8 t}{d} \right ] \\
\end{align*}
which is positive as long as $d$ is sufficiently large. 

\medskip

Finally, suppose that $\max \{c_v, c_u, 1/c_u, 1/c_v \} > d$. Since $\delta(G) \geq d/4$, $\max \{c_v, c_u \} \geq d$. Without loss of generality assume that $c_v \geq c_u$. Then 
\[
\log( | d(u)/d(v) | ) = \log( c_v/c_u) \geq \log(c_v/4)
\]
Aditionally, when $x > d$,
\[
H_d(x) \geq  \frac{1}{8}   \left ( \log (x/2) -  \frac{ x- 1}{x} \frac{d}{d - i} \right ) = \frac{1}{8} \log(x) + O(1)
\]
and so
\begin{align*}
\Sigma_{u, v, i}^1 + \Sigma_{u, v, i}^2 - \Sigma_{u, v, i}^3 & \geq \Sigma_{u, v, i}^1 - \Sigma_{u, v, i}^3 \\
& \geq \frac{1}{8} \log (c_v) + O(1)  - \frac{8 t^2}{d} \left [ \log \left ( 4 c_v  \right ) + \frac{8 t}{d} \right ]
\end{align*}
which is positive as long as $d$ is sufficiently large. 

\end{proof}
\end{claim}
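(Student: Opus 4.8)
The plan is to establish $\Sigma^1_{u,v,i} + \Sigma^2_{u,v,i} - \Sigma^3_{u,v,i} \ge 0$ by a three-case analysis on $M := \max\{c_u, c_v, 1/c_u, 1/c_v\}$, which records how far the degrees $d(u) = c_u d$ and $d(v) = c_v d$ deviate from the average degree $d$; given the statement's choice $\vep = 1/(2^8 t^3)$, the natural thresholds are $M < 1+\vep$, $1+\vep \le M \le d$, and $M > d$. Three inputs are used throughout. From Lemma \ref{p1_bound}, $\Sigma^1_{u,v,i} = \tfrac18\big(f(c_u) + f(c_v)\big)$ is a sum of non-negative terms, $f$ vanishes at $x=1$, and $f$ is increasing for $x \ge 1$ and decreasing on $[\,i/(d-i),\,1\,]$; moreover, since $x\log x$ is convex, $f(x) \to \log x - 1 + 1/x$ as $d \to \infty$, a function that is $0$ at $x=1$, strictly positive elsewhere, and of size $\approx \vep^2/2$ at $x = 1\pm\vep$. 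The second input is the explicit form of the error terms $\Sigma^2_{u,v,i} = \tfrac1{8t}\min\{1/d(u),1/d(v)\}$ and $\Sigma^3_{u,v,i} = \tfrac{8t^2}{d}\big(|\log(d(u)/d(v))| + \tfrac{8t}{d}\big)$ from Lemmas \ref{p2_bound} and \ref{p3_bound}. The third is the hypothesis $\delta(G) \ge d/4$, which gives $c_u, c_v \ge 1/4$.

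In the near-uniform regime $M < 1+\vep$, I would discard $\Sigma^1 \ge 0$ and prove $\Sigma^2 \ge \Sigma^3$ outright. Every relevant degree then lies in $\big((1+\vep)^{-1}d,\ (1+\vep)d\big)$, so $\Sigma^2 \ge \tfrac{1}{8t(1+\vep)d} \ge \tfrac{1}{9td}$, while $|\log(d(u)/d(v))| \le 2\log(1+\vep) \le 2\vep$ gives $\Sigma^3 \le \tfrac{8t^2}{d}\big(2\vep + \tfrac{8t}{d}\big)$. The point of the choice of $\vep$ is exactly that $\tfrac{8t^2}{d}\cdot 2\vep = \tfrac1{16td}$, so $\Sigma^2 - \Sigma^3 \ge \tfrac{1}{9td} - \tfrac{1}{16td} - \tfrac{64t^3}{d^2} = \tfrac{7}{144td} - \tfrac{64t^3}{d^2}$, which is positive once $d$ exceeds a constant multiple of $t^4$.

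In the intermediate regime $1+\vep \le M \le d$, at least one of $c_u, c_v$ — say $c_v$ — lies outside $\big((1+\vep)^{-1}, 1+\vep\big)$; since $c_v \ge 1/4 \ge i/(d-i)$ for $d$ large, the monotonicity of $f$ forces $f(c_v) \ge \min\{f(1+\vep),\, f((1+\vep)^{-1})\}$, and since the other summand of $\Sigma^1$ is non-negative, $\Sigma^1 \ge \tfrac18\min\{f(1+\vep),\, f((1+\vep)^{-1})\}$. By the convexity limit this lower bound tends as $d\to\infty$ to a strictly positive constant of size $\approx \vep^2/16$, so $\Sigma^1 \ge K(t) > 0$ for all large $d$; meanwhile $M \le d$ and $c_u, c_v \ge 1/4$ keep $d(u)/d(v) \in [(4d)^{-1}, 4d]$, whence $\Sigma^3 \le \tfrac{8t^2}{d}\big(\log(4d) + \tfrac{8t}{d}\big) \to 0$, and $\Sigma^1 - \Sigma^3 > 0$ once $d$ is large in terms of $t$. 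In the remaining regime $M > d$, the bound $c_u, c_v \ge 1/4$ rules out $1/c_u$ or $1/c_v$ being the maximum, so $\max\{c_u, c_v\} > d$; taking $c_v \ge c_u$ and using that $f$ is increasing on $[1,\infty)$ gives $\Sigma^1 \ge \tfrac18 f(c_v) \ge \tfrac18\log c_v - O(1)$, while $d(u)/d(v) = c_u/c_v \ge 1/(4c_v)$ gives $\Sigma^3 \le \tfrac{8t^2}{d}\big(\log(4c_v) + \tfrac{8t}{d}\big)$; since $c_v > d$ and $\tfrac{8t^2}{d} < \tfrac1{16}$ for $d$ large, the term $\tfrac18\log c_v$ swamps both $\Sigma^3$ and the $O(1)$ error.

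I expect the intermediate regime to be the main obstacle: it requires a lower bound on $\Sigma^1$ that is uniform over all pairs of degrees bounded away from the average, and this bound inevitably decays like $\vep^2$ as the cutoff $\vep$ shrinks with $t$, which is why the argument produces a $d_0(t)$ polynomial in $t$ rather than the conjecturally optimal $d_0(t) = t$. The other two regimes, and all the intermediate estimates, reduce to elementary analysis of the explicit function $f$ together with the error bounds supplied by Lemmas \ref{p1_bound}, \ref{p2_bound} and \ref{p3_bound}.
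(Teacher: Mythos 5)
Your proposal is correct and follows essentially the same argument as the paper: the identical three-way split on $\max\{c_u,c_v,1/c_u,1/c_v\}$ with threshold $1+\vep=1+1/(2^8t^3)$, discarding $\Sigma^1$ in the near-uniform case and $\Sigma^2$ in the other two, and using the monotonicity/limit of $f$ from Lemma \ref{p1_bound} together with $c_u,c_v\ge 1/4$ to control $\Sigma^3$. The only differences are cosmetic (e.g.\ $\log(4d)$ versus $\log(8d)$, and your lower bound $\Sigma^1\ge h_d$ in the intermediate regime, which is the careful version of the paper's stated $2h_d$), so nothing further is needed.
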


By claim \ref{main_claim}
\[
\log( |\mon(T, G)| ) \geq H(\phi) \geq \log( n(d)_t)
\]
Exponentiating proves the inequality in Lemma \ref{weak}.

\medskip

Now suppose that the diameter of $T$ is at least $3$ and $|\mon(T, G)| = n (d)_t$. Then by Claim \ref{p2_bound}, every path of length $3$ in $G$ is a triangle. This implies that $G$ is the disjoint union of cliques. It is easily checked that in this case equality in Lemma $\ref{weak}$ holds iff every clique has the same size.

\section{Proof of Main Lemmas}
In this section we prove the lemmas stated in the previous section. We begin by proving a few other lemmas that we will use to prove Lemmas \ref{p1_bound}, \ref{p2_bound}, and \ref{p3_bound}.
\begin{lemma} {\label{reverse_bound}} If the path from $x_0$ to $x_{a(i + 1)}$ in $T^i$ has length $k$, then for any path $p = (p_0, \cdots, p_k)$ in $G$,
\begin{equation}
	1 - \frac{8 t^2}{d} \leq \frac{P \left (\phi^i \in \Gamma_i(p) \right )}{P \left (\phi^i \in \Gamma_i(\cev p) \right ) } \leq 1 + \frac{8 t^2}{d}
\end{equation}
\end{lemma}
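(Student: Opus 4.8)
The plan is to compare $P(\phi^i \in \Gamma_i(p))$ and $P(\phi^i \in \Gamma_i(\cev p))$ by tracking, step by step along the path $(p_0,\dots,p_k)$, how the greedy process assigns the vertices lying on the $x_0$-to-$x_{a(i+1)}$ path. Write $x_0 = y_0, y_1, \dots, y_k = x_{a(i+1)}$ for this path in $T$, and recall that $\phi_0$ is chosen with probability $d(y\text{-image})/2|E|$ and that at each later step a vertex is chosen uniformly from the available neighbours of its (already-placed) parent. For a \emph{fixed} choice of images for the side-branches hanging off the path, the probability that $\phi$ traverses $p_0, p_1, \dots, p_k$ along $y_0,\dots,y_k$ is a product of factors: an initial factor $d(p_0)/2|E|$ (or $d(p_k)/2|E|$ in the reversed case) and then factors of the form $1/|N_+(j)|$ where $N_+(j)$ is the available-neighbourhood set at the step placing $y_j$. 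Since $T^i$ may have branches both before and after the path-defining vertices in the BFS order, I would sum over all configurations of the other vertices; the key point is that the \emph{combinatorial structure} of which vertices are forbidden is symmetric under reversing the path, so the two sums run over ``the same'' index set and differ only through the sizes of the sets $N_+$ and the initial degree factor.

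The main estimate is therefore: each factor $1/|N_+(j)|$ for $p$ and the corresponding factor for $\cev p$ differ multiplicatively by at most $1 + O(t/d)$. Indeed $|N_+(j)| = d(p_{?}) - (\text{number of already-used vertices in } N(p_?))$, and the number of already-used vertices is at most $t$, while $d(p_?) \geq \delta(G) \geq d/4$; so each such factor lies in $d(p_?)(1 \pm 4t/d)$, i.e. $|N_+(j)|$ for the forward and reversed traversals agree up to a factor $1 + O(t/d)$ (the denominators $d(p_?)$ themselves cancel in pairs because the same $G$-vertices are visited, just in the opposite order). The initial factors $d(p_0)/2|E|$ versus $d(p_k)/2|E|$ do \emph{not} obviously cancel, but this is exactly the content already isolated: since we are taking a ratio and there are $\le k \le t$ such factors each off by $1 + O(t/d)$, with a little care one gets $(1+O(t/d))^{t} \le 1 + O(t^2/d)$, which after fixing constants gives the claimed bound $1 \pm 8t^2/d$. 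A clean way to organize this is to exhibit an explicit measure-preserving-up-to-$(1+O(t/d))$ bijection between the events $\{\phi^i \in \Gamma_i(p)\}$ and $\{\phi^i \in \Gamma_i(\cev p)\}$ realized on the probability space of all greedy runs — swap the roles of the forward and backward endpoints and of each pair of symmetric side-branches — and bound the Radon–Nikodym factor term by term.

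The step I expect to be the main obstacle is the bookkeeping of the side-branches and the non-path vertices of $T^i$: because the BFS order can interleave path-vertices with branch-vertices in complicated ways, one must check that the ``forbidden set'' $\{\gamma_0,\dots,\gamma_{i-1}\}$ seen at the moment of placing each $y_j$ has the same \emph{size distribution} (not just the same maximum size $\le t$) under the forward and reversed pairings, so that the per-factor error really is $1 + O(t/d)$ rather than something larger. Handling this cleanly probably requires setting up the bijection on full runs $\phi$ (not just on $\phi^i$) so that it literally permutes the set of placed $G$-vertices at every intermediate step, making the number of used neighbours of $p_j$ identical on the two sides; then the only genuinely uncancelled quantity is the ratio $d(p_0)/d(p_k)$ of the initial factors — but the presence of that ratio here is harmless, since it is $\le 4$ and gets absorbed, and it will be the term $\Pi_i^3$ elsewhere that actually pays attention to it. Once the bijection is in place, the inequality follows from $\prod (1 + O(t/d)) \le 1 + O(t^2/d)$ and choosing $d_0(t)$ large enough that the constant works out to $8$.
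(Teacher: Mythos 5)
There is a genuine gap, and it is exactly at the point you flagged as ``harmless'': the endpoint degree factors. You claim the only uncancelled quantity is the ratio $d(p_0)/d(p_k)$, that it is at most $4$, and that it can be absorbed (or deferred to $\Pi_i^3$). None of this works. First, $\delta(G)\geq d/4$ bounds degrees from below only, so $d(p_0)/d(p_k)$ can be as large as roughly $4(n-1)/d$, unbounded in $n$. Second, even a ratio bounded by a constant such as $4$ would be fatal: the lemma asserts the ratio of the two probabilities is $1\pm 8t^2/d$, i.e.\ within $o(1)$ of $1$ for large $d$, so a multiplicative factor bounded away from $1$ cannot be ``absorbed'' into $8t^2/d$. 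Third, deferring the discrepancy to $\Pi_i^3$ is circular: in the paper $\Pi_i^3$ is controlled \emph{using} this lemma (via Lemma \ref{reverse_bound_2}), and if the ratio really carried a factor $d(p_0)/d(p_k)$ the statement of Lemma \ref{reverse_bound} would simply be false for graphs with spread-out degrees.

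The missing idea is that the endpoint degree cancels \emph{exactly}, because $x_0$ is a leaf of $T$ and $\phi_0$ is chosen with the degree-biased distribution: the probability that the first oriented edge is $(p_0,p_1)$ equals
\[
P(\phi_0=p_0)\cdot P(\phi_1=p_1\mid \phi_0=p_0)=\frac{d(p_0)}{2|E(G)|}\cdot\frac{1}{d(p_0)}=\frac{1}{2|E(G)|},
\]
independent of $p_0$, and likewise for $\cev p$ with $p_k$. After this, every conditional factor along the tree path involves only the degrees of the \emph{interior} vertices $p_1,\dots,p_{k-1}$ (each factor lying between $1/d(p_{j})$ and $1/(d(p_{j})-t)$), and $\{p_1,\dots,p_{k-1}\}$ is the same set for $p$ and $\cev p$, so these main terms cancel between numerator and denominator; the non-path vertices of $T^i$ contribute factors between $(\delta(G)-t)/\delta(G)$ and $1$ on each side, which is also where your worry about side-branch bookkeeping is resolved --- no measure-preserving bijection on full runs is needed, crude upper and lower bounds on each conditional factor suffice. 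Multiplying at most $t$ factors each within $1\pm O(t/d)$ (using $\delta(G)\geq d/4$) then gives $1\pm 8t^2/d$. Your per-factor $(1+O(t/d))^t$ bookkeeping is fine; without the exact cancellation of the endpoint degree, however, the claimed bound is out of reach, so the proposal as written does not prove the lemma.
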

\begin{proof} Let $ \rho = (\rho_0, \cdots, \rho_k)$ be the path in $T^i$ from $x_0$ to $x_{a(i + 1)}$. Let $E_j$ be the event that 
	\begin{itemize}
	\item $\phi_j \not \in p$ if $x_j \not \in \rho$
	\item $\phi_j = p_k$ if $x_j = \rho_k$. 
\end{itemize}
Because $x_0$ is a leaf in $T^i$, $P(E_1 | E_0) = 1/d(p_0)$ and
\begin{equation} {\label{lem11}}
P \left (\phi^i \in \Gamma(p) \right ) = P(\phi_0 = p_0) \prod_{j = 1}^i P(E_j | E_0, \cdots, E_{j - 1} ) = \frac{1}{2|E(G)|} \prod_{j = 2}^i P(E_j | E_0 \cdots E_{j - 1} )
\end{equation}

We proceed by bounding $P(E_j | E_0, \cdots, E_{j - 1})$ for $j \geq 2$. If $x_j \not \in \rho$ then there are at least $\delta(G) - t$ choices for $\phi_j$ that do not intersect $p$ and so,
\begin{equation} {\label{lem12}}
\frac{ \delta(G) - t}{\delta(G)} \leq P(E_j | E_0, \cdots E_{j - 1} ) \leq 1
\end{equation}
If $x_j = \rho_k$ for some $k$ and $E_0, \cdots, E_{j - 1}$ hold, then $\phi_{a(j)} = p_{k - 1}$ and there must be between $d(p_{k-1}) - t$ and $d(p_{k - 1})$ choices for $\phi_j$. So, 
\begin{equation} {\label{lem13}}
\frac{1}{ d(p_{k - 1}) } \leq P(E_j | E_0 \cdots, E_{j - 1} ) \leq \frac{1}{ d(p_{k - 1}) - t}
\end{equation}
Putting (\ref{lem11}), (\ref{lem12}), and (\ref{lem13}) together gives
\[
\frac{1}{2|E(G)|} \left ( \frac{ \delta(G) - t}{\delta(G)} \right ) ^{t - r} \prod_{j = 1}^{k-1} \frac{1}{d(p_j)} \leq P \left (\phi^i \in \Gamma_i (p) \right ) \leq \frac{1}{2|E(G)|} \prod_{j = 1}^{k-1} \frac{1}{d(p_j) - t}
\]
Since $\{p_1, \cdots, p_{k - 1} \} = \{ \cev{p_1}, \cdots, \cev{p_{k - 1}} \}$ as sets, applying the above to $p$ and $\cev p$ and using $\delta(G) \geq d/4$ gives
\[
1 - \frac{8t^2}{d} \leq \left ( \frac{ \delta(G) - t}{ \delta(G)} \right )^t \leq \frac{ P \left (\phi^i \in \Gamma_i(p) \right )}{P \left (\phi^i \in \Gamma_i(\cev p) \right )} \leq \frac{1}{\left ( \delta(G) - t \right ) / \left ( {\delta(G) } \right ) ^t } \leq 1 + \frac{8 t^2}{d}
\]
as long as $d$ is sufficiently large.
\end{proof}

\begin{corollary} {\label{cor}} Let $u, v \in V(G)$, as long as $\Gamma_i(u, v)$ is non-empty, 
	\begin{equation}
		1 - \frac{8 t^2}{d} \leq \frac{P \left  (\phi^i \in \Gamma_i(u,v) \right )} {P \left  (\phi^i \in \Gamma_i(v, u)  \right )} \leq 1 + \frac{8t^2}{d} {\label{cor_1}}
\end{equation}
and
\begin{equation}
	1 - \frac{8t^2}{d} \leq \frac{P \left (\phi^i \in \Gamma_i(u, *) \right )} {P\left (\phi^i \in \Gamma_i(* , u) \right )} \leq 1 + \frac{8t^2}{d} {\label{cor_2}}
\end{equation}
\end{corollary}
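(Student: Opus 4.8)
The plan is to obtain both displayed inequalities from Lemma \ref{reverse_bound} by partitioning the relevant embedding families according to the image of the path from $x_0$ to $x_{a(i+1)}$ in $T^i$, and then adding the estimate of that lemma term by term. The only elementary fact needed is that if $(1-\alpha)b_p \le a_p \le (1+\alpha)b_p$ holds for every index $p$, with each $b_p \ge 0$, then $(1-\alpha)\sum_p b_p \le \sum_p a_p \le (1+\alpha)\sum_p b_p$; hence, once the denominators below are known to be positive, the bounds on the ratios of the sums follow.

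First I would prove (\ref{cor_1}). Let $k$ be the length of the path from $x_0$ to $x_{a(i+1)}$ in $T^i$, the quantity appearing in Lemma \ref{reverse_bound}. If $\gamma \in \Gamma_i(u,v)$, then the images of the vertices on that path, which read $u = \gamma(x_0), \gamma(x_{a^{k}(i+1)}), \dots, \gamma(x_{a(i+1)}) = v$, form a walk in $G$ because $\gamma$ is a homomorphism, and in fact a path because $\gamma$ is injective; conversely each length-$k$ path from $u$ to $v$ in $G$ arises this way. Therefore
\[
\Gamma_i(u,v) = \bigsqcup_{p} \Gamma_i(p),
\]
a disjoint union over all length-$k$ paths $p = (p_0, \dots, p_k)$ in $G$ with $p_0 = u$ and $p_k = v$. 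Since $q \mapsto \cev{q}$ is a bijection between length-$k$ paths from $v$ to $u$ and length-$k$ paths from $u$ to $v$, the same reasoning gives $\Gamma_i(v,u) = \bigsqcup_{p} \Gamma_i(\cev{p})$ over the very same index set, so
\[
P \left (\phi^i \in \Gamma_i(u,v) \right ) = \sum_{p} P \left (\phi^i \in \Gamma_i(p) \right ), \qquad P \left (\phi^i \in \Gamma_i(v,u) \right ) = \sum_{p} P \left (\phi^i \in \Gamma_i(\cev{p}) \right ).
\]
Applying Lemma \ref{reverse_bound} to each $p$ and summing then yields (\ref{cor_1}), as soon as the denominator is positive. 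This is the only place the hypothesis $\Gamma_i(u,v) \ne \emptyset$ enters: every injective homomorphism of $T^i$ lies in the support of $\phi^i$ -- at each step of the greedy process the intended target vertex belongs to the set $N_+$ in question, precisely because $\gamma$ is an edge-preserving injection -- so some term $P(\phi^i \in \Gamma_i(p))$ is strictly positive, and then by the two-sided bound of Lemma \ref{reverse_bound} the matching term $P(\phi^i \in \Gamma_i(\cev{p}))$ is positive too, whence $P(\phi^i \in \Gamma_i(v,u)) > 0$.

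For (\ref{cor_2}) I would sum (\ref{cor_1}) over $v$. Since $\Gamma_i(u,*) = \bigsqcup_{v \in V(G)} \Gamma_i(u,v)$ and $\Gamma_i(*,u) = \bigsqcup_{v \in V(G)} \Gamma_i(v,u)$,
\[
P \left (\phi^i \in \Gamma_i(u,*) \right ) = \sum_{v} P \left (\phi^i \in \Gamma_i(u,v) \right ), \qquad P \left (\phi^i \in \Gamma_i(*,u) \right ) = \sum_{v} P \left (\phi^i \in \Gamma_i(v,u) \right ).
\]
For each $v$ with $\Gamma_i(u,v) \ne \emptyset$, inequality (\ref{cor_1}) controls the ratio of the two corresponding terms; for each $v$ with $\Gamma_i(u,v) = \emptyset$ the argument above also forces $\Gamma_i(v,u) = \emptyset$, so both terms are $0$. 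Summing, and using that $\Gamma_i(u,*) \ne \emptyset$ (hence the denominator is positive) because $u$ has positive degree, gives (\ref{cor_2}).

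I do not expect a real obstacle here: all the substance is in Lemma \ref{reverse_bound}, and what remains is a summation. The only slightly delicate points are organizational -- checking that $\Gamma_i(u,v)$ and $\Gamma_i(v,u)$ decompose as disjoint unions over the same index set under $p \leftrightarrow \cev{p}$, and ruling out a $0/0$ by observing that $\Gamma_i(p)$ is non-empty exactly when $\Gamma_i(\cev{p})$ is, which is itself immediate from the two-sided inequality of Lemma \ref{reverse_bound}.
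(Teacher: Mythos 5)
Your proposal is correct and follows essentially the same route as the paper: decompose $\Gamma_i(u,v)$ and $\Gamma_i(v,u)$ over the length-$k$ paths $p$ from $u$ to $v$ (pairing $p$ with $\cev{p}$), apply Lemma \ref{reverse_bound} termwise, and sum — then sum over $v$ for (\ref{cor_2}). Your extra care about positivity of the denominators and the empty cases is a minor refinement the paper leaves implicit.
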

\begin{proof} Let $k$ be the length of the path from $x_0$ to $x_{a(i + 1)}$ in $T^i$ and let $P_k(u, v)$ denote the set of paths of length $k$ from $u$ to $v$ in $G$. Then
\[
\frac{P \left (\phi^i \in \Gamma_i(u, v) \right )}{P\left (\phi^i \in \Gamma_i(v, u) \right )} = \left ( \sum_{p \in P_k(u, v) } P\left (\phi^i \in \Gamma_i(p) \right ) \right )  \Big /  \left ( { \sum_{p \in P_k(u, v) } P \left (\phi^i \in \Gamma_i(\cev p) \right )} \right ) 
\]
and (\ref{cor_1}) follows from Lemma \ref{reverse_bound}. Similarly,

\begin{align*}
\frac{P \left (\phi^i \in \Gamma_i(u, *) \right )}{P\left (\phi^i \in \Gamma_i(*, u) \right )} & = 
\left ( \sum_{v \in V(G)} \sum_{p \in P_k(u, v) } P \left (\phi^i \in \Gamma_i(p) \right ) \right )  \Big / \left ( \sum_{v \in V(G)} \sum_{p \in P_k(v, u) } P \left (\phi^i \in \Gamma_i(p) \right ) \right ) \\
& =  \left ( \sum_{v \in V(G)} \sum_{p \in P_k(u, v)}  P\left (\phi^i \in \Gamma_i(p)  \right ) \right ) \Big / \left ( \sum_{v \in V(G)} \sum_{p \in P_k(u, v)} P \left (\phi^i \in \Gamma_i(\cev p) \right )  \right ) 
\end{align*}

and (\ref{cor_2}) follows from Lemma \ref{reverse_bound}
\end{proof}

\begin{definition} For a path $p$, we define $\Gamma_i^{\overline{nc} }$ to be the set of functions in $\mon (T^{a(i + 1)}, G)$ which are restrictions of functions in $\Gamma_i^{nc}(p)$ to the tree $T^{a(i + 1)}$.
\end{definition}

\begin{lemma} {\label{reverse_bound_2}} Let $u, v \in V(G)$ such that $\Gamma_i^{nc} (u, v)$ is non-empty. Then
\[
1 - \frac{8t^2}{d} \leq \frac{ P \left (\phi^i \in \Gamma_i^{nc}(u, v) \right ) }{ P\left (\phi^i \in \Gamma_i^{nc} (v, u) \right )} \leq 1 + \frac{8 t^2}{d}
\]
\end{lemma}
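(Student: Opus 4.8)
\textbf{Proof plan for Lemma \ref{reverse_bound_2}.}

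The plan is to run the decomposition of Corollary~\ref{cor}, but with a pairing of backbone images that is compatible with non-completeness. Let $k$ be the length of the path in $T^i$ from $x_0$ to $x_{a(i+1)}$, and for a path $p=(p_0,\dots,p_k)$ in $G$ recall $\Gamma_i^{nc}(p)=\Gamma_i(p)\cap\Gamma_i^{nc}$. Every $\gamma\in\Gamma_i(u,v)$ determines its backbone image, which is a member of $P_k(u,v)$, so $\Gamma_i^{nc}(u,v)=\bigsqcup_{p\in P_k(u,v)}\Gamma_i^{nc}(p)$, and likewise $\Gamma_i^{nc}(v,u)=\bigsqcup_{q\in P_k(v,u)}\Gamma_i^{nc}(q)$. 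It therefore suffices to produce a bijection $p\mapsto p^{\ast}$ from $P_k(u,v)$ to $P_k(v,u)$ with $\Gamma_i^{nc}(p)=\emptyset\iff\Gamma_i^{nc}(p^{\ast})=\emptyset$ and
\[
1-\tfrac{8t^2}{d}\ \le\ \frac{P(\phi^i\in\Gamma_i^{nc}(p))}{P(\phi^i\in\Gamma_i^{nc}(p^{\ast}))}\ \le\ 1+\tfrac{8t^2}{d}
\]
for every $p$ with $\Gamma_i^{nc}(p)\neq\emptyset$; a weighted averaging (mediant) argument then gives the lemma, since $\Gamma_i^{nc}(u,v)$ is assumed nonempty.

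The key first observation is that completeness of $\gamma\in\mon(T^i,G)$ is a property of the restriction $\gamma|_{T^{a(i+1)}}$, since the defining inequalities involve only $x_0$, $x_{a(i+1)}$ and $x_j$ with $0<j<a(i+1)$; marginalizing the unassigned vertices gives $P(\phi^i\in\Gamma_i^{nc}(p))=P(\phi^{a(i+1)}\in\Gamma_i^{\overline{nc}}(p))$. In $T^{a(i+1)}$ both endpoints $x_0$ and $x_{a(i+1)}$ of the backbone are leaves, so no branch of $T^{a(i+1)}$ hangs off either of them --- this is exactly why we pass to $T^{a(i+1)}$. Call $p$ \emph{interior-complete} if $p_0$ and $p_k$ are each adjacent to all of $p_1,\dots,p_{k-1}$. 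Interior-completeness is preserved both by the reversal $p\mapsto\cev p$ and by the \emph{twist} $p\mapsto\tilde p:=(p_k,p_1,p_2,\dots,p_{k-1},p_0)$; moreover $\tilde p$ is a genuine path in $G$ precisely when $p$ is interior-complete, and both operations are endpoint-swapping bijections of $P_k(u,v)$ onto $P_k(v,u)$. I take $p^{\ast}=\cev p$ when $p$ is not interior-complete and $p^{\ast}=\tilde p$ when it is; these glue to a single bijection $P_k(u,v)\to P_k(v,u)$ because the two cases partition each side.

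When $p$ is not interior-complete, its interior backbone vertices already witness that no embedding with backbone $p$ is complete, so $\Gamma_i^{nc}(p)=\Gamma_i(p)$ and $\Gamma_i^{nc}(\cev p)=\Gamma_i(\cev p)$, and the required ratio bound is exactly Lemma~\ref{reverse_bound}. When $p$ is interior-complete, consider the map $\Psi$ on $\mon(T^{a(i+1)},G)$ that swaps the images of $x_0$ and $x_{a(i+1)}$ and fixes every other coordinate. Because $p_0\sim p_{k-1}$, $p_k\sim p_1$, and nothing in $T^{a(i+1)}$ hangs off $x_0$ or $x_{a(i+1)}$, $\Psi(\gamma)$ is again an injective homomorphism, now with backbone image $\tilde p$; and $\Psi$ carries $\Gamma_i^{\overline{nc}}(p)$ bijectively onto $\Gamma_i^{\overline{nc}}(\tilde p)$, since the completeness condition is symmetric in the two endpoint-images and the remaining coordinates are untouched (in particular one of these sets is empty iff the other is). For the weights, $P(\phi_0=\gamma_0)\,P(\phi_1=\gamma_1\mid\phi_0=\gamma_0)=\tfrac{d(\gamma_0)}{2|E|}\cdot\tfrac{1}{d(\gamma_0)}=\tfrac1{2|E|}$ is the same for $\gamma$ and $\Psi(\gamma)$, while each later conditional factor is $1/|N(\gamma_{a(j)})\setminus\{\gamma_0,\dots,\gamma_{j-1}\}|$ with $\gamma_{a(j)}$ unchanged (one checks $a(j)\notin\{0,a(i+1)\}$ for $j\ge2$, using that $x_{a(i+1)}$ is a leaf of $T^{a(i+1)}$) and with the excluded set altered only by replacing the entry $u$ by the entry $v$; so each of the at most $t$ factors moves by at most $1$ in a quantity of size at least $\delta(G)-t\ge d/4-t$, giving $P(\phi^{a(i+1)}=\gamma)/P(\phi^{a(i+1)}=\Psi(\gamma))\in[1-8t^2/d,\,1+8t^2/d]$ once $d$ is large. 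Summing over $\gamma\in\Gamma_i^{\overline{nc}}(p)$ and averaging yields the per-path bound, and the mediant step finishes the proof.

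The genuinely delicate point is the interior-complete case: one cannot simply reuse $p\mapsto\cev p$ there, because for interior-complete $p$ the set $\Gamma_i^{nc}(p)$ is a \emph{proper} subset of $\Gamma_i(p)$ cut out by adjacency conditions on branch-vertex images, and reversal permutes the interior backbone vertices --- hence the branch attachment points --- so the branch contributions to $P(\phi^i\in\Gamma_i(p))$ and $P(\phi^i\in\Gamma_i(\cev p))$ no longer match in the way exploited inside Lemma~\ref{reverse_bound}. The twist is built to keep the interior of the backbone pointwise fixed, so that after the reduction to $T^{a(i+1)}$ (which strips branches from the endpoints) the endpoint-swap $\Psi$ becomes an honest, completeness-preserving, nearly weight-preserving bijection of embeddings. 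Verifying that $\Psi$ really lands in $\mon(T^{a(i+1)},G)$ --- i.e. that interior-completeness of $p$ together with the leaf status of $x_0,x_{a(i+1)}$ is exactly what makes the swapped backbone edges and branch edges survive --- and that reversal and twist together tile $P_k(v,u)$, are the two things to check carefully.
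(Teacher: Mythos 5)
Your proposal is correct and follows essentially the same route as the paper's proof: split the backbone images into complete and non-complete paths, handle non-complete paths via reversal and Lemma~\ref{reverse_bound} (using $\Gamma_i^{nc}(p)=\Gamma_i(p)$ there), and handle complete paths by passing to $T^{a(i+1)}$ and using the endpoint-swapping twist as a completeness-preserving, nearly weight-preserving bijection $\Gamma_i^{\overline{nc}}(p)\to\Gamma_i^{\overline{nc}}(\tilde p)$. The only differences are organizational (your single glued bijection $p\mapsto p^{\ast}$ replaces the paper's half-sum bookkeeping with $\widetilde{\cev p}$, and your factor-by-factor ``differs by at most one'' estimate replaces the paper's product bounds), which do not change the argument.
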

\begin{proof} Let $k$ be the length of the path from $x_0$ to $x_{a(i + 1)}$ in $T$. Say a path $p = (p_0, \cdots, p_k)$ from $u$ to $v$ in $G$ is complete if $\{p_0, p_j\}, \{p_k, p_j\} \in E(G)$ for every $0 < j < k$. If $p$ is complete, define it's twist to be the path $\tilde{p} = (p_k, p_1, \cdots, p_{k-1}, p_0)$. Let $P_k^c(u, v)$ be the set of all complete paths of length $k$ from $u$ to $v$ and let $P_k^{nc}(u, v)$ be the set of all non-complete paths of length $k$ from $u$ to $v$. If $\phi \in \mon(T^i, G)$ then $\phi^i \in \Gamma_i^{nc}(p)$ iff $\phi^{a(i + 1)} \in \Gamma_i^{\overline{nc}} (p)$ and so 
	\[
	P \left (\phi^i \in \Gamma_i^{nc} (p) \right ) = P\left (\phi^{a(i + 1)} \in \Gamma_i^{\overline{nc}} (p) \right )
	\]
Using the two facts 
\begin{itemize}
\item If $p$ is a non-complete path then $\Gamma_i^{nc} (p) = \Gamma_i(p)$ 
\item The twist is a bijection from $P_k^c (u, v)$ to $P_k^c(v, u)$ for any vertices $u, v$
\end{itemize}
we obtain
\begin{align}
	P \left (\phi^i \in \Gamma_i^{nc} (u, v) \right ) = & \sum_{p \in P_k^{nc}(u, v)} P \left (\phi^i \in \Gamma_i^{nc} (p)  \right ) + \sum_{p \in P_k^{c}(u, v) } P \left (\phi^i \in \Gamma_i^{nc} (p) \right ) \nonumber \\
	= & \sum_{p \in P_k^{nc}(u, v) } P \left (\phi^i \in \Gamma_i(p) \right ) + \frac{1}{2} \sum_{p \in P_k^{c}(u, v) }  P \left ( \phi^{a(i + 1)} \in \Gamma_i^{\overline{nc}}(p)  \right ) \nonumber \\
 & +  \frac{1}{2}  \sum_{p \in P_k^c(u, v)} P \left (\phi^{a(i + 1)} \in \Gamma_i^{\overline{nc}} \left (\widetilde{ \cev p } \right ) \right ) {\label{lem21}}
\end{align}
On the other hand, since the map $p \rightarrow \cev p$ is a bijection from $P_k^{nc}(u, v)$ to $P_k^{nc}(v, u)$ and also from $P_k^c (u, v)$ to $P_k^c(v, u)$, 
\begin{align}
	 P\left (\phi^i \in \Gamma_i^{nc} (v, u) \right ) = & \sum_{p \in P_k^{nc}(v, u)} P\left (\phi^i \in \Gamma_i^{nc} (p) \right ) + \sum_{p \in P_k^{c}(v, u) } P\left (\phi^i \in \Gamma_i^{nc} (p) \right ) \nonumber \\
	 = & \sum_{p \in P_k^{nc}(u, v)} P\left (\phi^i \in \Gamma_i(\cev p) \right ) + \frac{1}{2} \sum_{p \in P_k^c(u, v)} P\left (\phi^{a(i + 1))} \in \Gamma_i^{\overline{nc}} (\cev p) \right ) \nonumber \\
& +  \frac{1}{2} \sum_{p \in P_k^c(u, v)} P\left (\phi^{a(i + 1)} \in \Gamma_i^{\overline{nc}} (\tilde{p}) \right ) {\label{lem22}}
\end{align}
By (\ref{lem21}) and (\ref{lem22}), it suffices to show the following three inequalities
\begin{itemize}
	\item For $p \in P_k^{nc}(u, v)$, 
		\begin{equation}
			1 - \frac{8t^2}{d} \leq \frac{ P \left (\phi^i \in \Gamma_i(p) \right )}{ P\left (\phi^i \in \Gamma_i(\cev p) \right )} \leq 1 + \frac{8t^2}{d} {\label{p_over_pr}}
		\end{equation}
	\item For $p \in P_k^c (u, v)$,  
		\begin{equation}
			1 - \frac{8 t^2}{d} \leq \frac{P\left (\phi^{a(i + 1)} \in \Gamma_i^{\overline{nc}}(p) \right )}{P\left (\phi^{a(i + 1)} \in \Gamma_i^{\overline{nc}} (\tilde{p}) \right )} \leq 1 + \frac{8t^2}{d} \label{twisted_ineq}
	\end{equation}
	\item For $p \in P_k^c (u, v)$,  
		\begin{equation}
			1 - \frac{8t^2}{d} \leq \frac{P \left (\phi^{a(i + 1)} \in \Gamma_i^{\overline{nc}} \left (\widetilde{ \cev p  } \right ) \right )}{P \left (\phi^{a(i + 1)} \in \Gamma_i^{\overline{nc}} (\cev p) \right )} \leq 1 + \frac{8t^2}{d} {\label{twisted_ineq2}}
	\end{equation}
\end{itemize}
(\ref{p_over_pr}) is exactly Lemma $\ref{reverse_bound}$. (\ref{twisted_ineq}) and (\ref{twisted_ineq2}) are equivalent after replacing $p$ with $ \widetilde{ \cev p }$, so we will proceed by proving (\ref{twisted_ineq}).

\medskip

{If $p$ is a complete path and $\gamma = (\gamma_0, \cdots, \gamma_{a(i + 1)} ) \in \Gamma_i^{\overline{nc}} (p)$, then (see the definition) the twist of $\gamma$, 
\[ \tilde{\gamma} = (\gamma_{a(i + 1)}, \gamma_1, \cdots, \gamma_{a(i + 1) - 1}, \gamma_0 ).\]  The twist is a bijection between $ \Gamma_i^{\overline{nc}}(p)$ and $\Gamma_i^{\overline{nc}} (\tilde{p})$ so
\[
\frac{P\left (\phi^{a(i + 1)} \in \Gamma_i^{\overline{nc}} (p) \right ) }{ P\left (\phi^{a(i + 1)} \in \Gamma_i^{\overline{nc}} ( \tilde{p}) \right )} =  \left ( \sum_{ \gamma \in \Gamma_i^{\overline{nc}}(p)} P\left ( \phi^{a(i + 1)} = \gamma \right )  \right ) \Big /  \left ( \sum_{ \gamma \in \Gamma_i^{\overline{nc}} (p)} P \left (\phi^{a(i + 1)} = \tilde{\gamma}  \right ) \right )
\]}
Therefore it suffices to show that for any $\gamma \in \Gamma_i^{\overline{nc}}(p)$,
\begin{equation} {\label{lem22_final}}
1 - \frac{8t^2}{d} \leq \frac{P\left (\phi^{a(i + 1)}  = \gamma \right )}{P \left (\phi^{a(i + 1)} = \tilde{\gamma} \right ) } \leq 1 + \frac{8t^2}{d}
\end{equation}
By the construction of our probability distribution $\phi$,
\begin{align*}
\frac{1}{2|E(G)| }  \prod_{j = 2}^{a(i + 1)}  \frac{1}{ d( \gamma_{a(j)}) } & \leq P(\phi^{a(i + 1)} = \gamma)  \leq \frac{1}{2|E(G)|} \prod_{j = 2}^{a(i + 1)} \frac{1}{ d(\gamma_{a(j)}) - t} \\
\frac{1}{2|E(G)| }  \prod_{j = 2}^{a(i + 1)}  \frac{1}{ d( \gamma_{a(j)}) } & \leq P(\phi^{a(i + 1)} = \tilde{\gamma})  \leq \frac{1}{2|E(G)|} \prod_{j = 2}^{a(i + 1)} \frac{1}{ d(\gamma_{a(j)}) - t}
\end{align*}
Using $\delta(G) \geq d/4$ gives (\ref{lem22_final}).
\end{proof}
\medskip

\begin{lemma} {\label{twist}} For any $0 \leq i < t$ and $u, v \in V(G)$,
\[
P\left (\phi^i \in \Gamma_i(v, u) \right ) - P\left (\phi^i \in \Gamma_i(u, v) \right )  = P\left (\phi^i \in \Gamma_i^{nc}(v,u) \right ) - P\left (\phi^i \in \Gamma_i^{nc}(u, v) \right )
\]
\end{lemma}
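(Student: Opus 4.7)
The plan is to exhibit a probability-preserving involution on the \emph{complete} embeddings (those in $\Gamma_i(u,v) \setminus \Gamma_i^{nc}(u,v)$) that swaps the roles of the two endpoints $u$ and $v$. Such a bijection forces the contributions of complete embeddings to cancel on the two sides of the claimed identity, leaving only the non-complete parts, which is precisely the lemma.

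First, I would reduce from $T^i$ down to $T^m$, where $m := a(i+1)$. Completeness of $\gamma \in \mon(T^i, G)$ depends only on $\pi := \gamma^m$, since the defining condition involves only $\gamma_0$, $\gamma_m$, and $\gamma_j$ for $0 < j < m$. Writing $\Pi_m^c(u,v)$ for the set of complete $\pi \in \mon(T^m, G)$ with $\pi_0 = u$ and $\pi_m = v$, the event $\{\phi^i \in \Gamma_i(u,v) \setminus \Gamma_i^{nc}(u,v)\}$ coincides with $\{\phi^m \in \Pi_m^c(u,v)\}$, so it suffices to prove
\[
P(\phi^m \in \Pi_m^c(u,v)) = P(\phi^m \in \Pi_m^c(v,u)).
\]

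To build the twist, for $\pi \in \Pi_m^c(u,v)$ define $\tilde\pi_0 := v$, $\tilde\pi_m := u$, and $\tilde\pi_j := \pi_j$ for $0 < j < m$. Injectivity is preserved by a swap, and the only edges of $T^m$ whose endpoints get relabeled are those incident to $x_0$ or $x_m$. Since $x_0$ is a leaf of $T$ and every child of $x_m$ in $T$ has index greater than $m$ in the BFS order, both $x_0$ and $x_m$ are leaves of $T^m$. When $m \geq 2$, their unique $T^m$-neighbors $x_1$ and $x_{a(m)}$ are non-leaves of $T^m$, and completeness of $\pi$ yields the adjacencies $v \pi_1, u \pi_{a(m)} \in E(G)$ that keep $\tilde\pi$ a valid homomorphism (the case $m = 1$ is immediate by symmetry of the single edge). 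By symmetry $\tilde\pi \in \Pi_m^c(v,u)$, and the twist is an involution, hence a bijection $\Pi_m^c(u,v) \to \Pi_m^c(v,u)$.

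To finish, I would show the twist preserves $P(\phi^m = \pi)$. Writing $P(\phi^m = \pi) = \frac{1}{2|E(G)|} \prod_{j=2}^{m} \frac{1}{|N_+(j;\pi)|}$ with $N_+(j;\pi) := N(\pi_{a(j)}) \setminus \{\pi_0, \ldots, \pi_{j-1}\}$, note that for every $j \geq 2$ one has $a(j) \neq 0$ (else $x_j$ would be a second child of the leaf $x_0$ of $T$) and $a(j) \neq m$ (else $x_j$ would be a child of the leaf $x_m$ of $T^m$), so $\pi_{a(j)} = \tilde\pi_{a(j)}$. The sets $\{\pi_0, \ldots, \pi_{j-1}\}$ and $\{\tilde\pi_0, \ldots, \tilde\pi_{j-1}\}$ differ only by interchanging $u$ and $v$, while $x_{a(j)}$ is a non-leaf of $T^m$ with $0 < a(j) < m$ (since it has $x_j$ as a child), so completeness forces both $u$ and $v$ to lie in $N(\pi_{a(j)})$. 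Hence the symmetric difference is contained in $N(\pi_{a(j)})$, giving $|N_+(j;\pi)| = |N_+(j;\tilde\pi)|$ and therefore $P(\phi^m = \pi) = P(\phi^m = \tilde\pi)$. The main hurdle is the BFS bookkeeping: verifying that every index $a(j)$ appearing in the probability product points to a strictly interior non-leaf of $T^m$ at which the completeness hypothesis may legitimately be invoked.
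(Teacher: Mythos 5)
Your proposal is correct and follows essentially the same route as the paper's proof: restrict to $T^{a(i+1)}$, define the twist that swaps the images of $x_0$ and $x_{a(i+1)}$, and check it is a probability-preserving bijection between the complete embeddings with endpoints $(u,v)$ and $(v,u)$, so their contributions cancel. Your write-up merely makes explicit the BFS bookkeeping (that $a(j)\notin\{0,a(i+1)\}$ for $j\ge 2$ and that the relevant $x_{a(j)}$ are interior non-leaves) which the paper leaves implicit.
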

\begin{proof} Let $\Gamma_{i}^{c}(v, u) = \Gamma_{i}(v, u) \setminus \Gamma_{i}^{nc}(v,u)$. We need to show
\[
P\left (\phi^i \in \Gamma_i^c(v, u) \right ) - P\left (\phi^i \in \Gamma_i^c(u, v) \right ) = 0 
\]
Let $\Gamma_i^{ \overline c}(v, u)$ be the set of functions in $\mon(T^{a(i + 1)}, G)$ which are restrictions of functions in $\Gamma_i^c(v, u)$ to $T^{a(i + 1)}$. 

\medskip

For $\gamma \in \Gamma_{i}^{ \overline c} (v, u)$ define the twist of $\gamma$ by $\tilde{\gamma} = (\gamma_{a(i + 1)}, \gamma_1, \cdots, \gamma_{a(i + 1) - 1}, \gamma_0)$. The twist is a bijection from $\Gamma_{i}^{ \overline c}(v, u)$ to $\Gamma_{i}^{ \overline c}(u, v)$ and for $\gamma \in \Gamma_i^{\overline c} (v, u)$,
\[
P \left (\phi^{a(i + 1)} = \tilde{\gamma} \right ) = \frac{1}{2|E(G)|} \prod_{j = 2}^{a(i + 1)} \frac{1}{d(\gamma_{a(j)}) - 1 - |N(\gamma_{a(j)}) \cap \{\gamma_1, \cdots, \gamma_{j - 1} \} | } = P \left (\phi^{a(i + 1)} = \gamma \right )
\]
Therefore,
\begin{align*}
	P \left (\phi^i \in \Gamma_i^c(v, u) \right ) - P\left (\phi^i \in \Gamma_i^c(u, v) \right ) &= P\left (\phi^{a(i + 1)} \in \Gamma_{i}^{\overline c}(v, u) \right ) - P\left (\phi^{a(i + 1)} \in \Gamma_{i}^{\overline c} (v, u) \right)\\
	&= \sum_{\gamma \in \Gamma_{i}^{ \overline c} (v, u) } P\left (\phi^{a(i + 1)} = \gamma \right ) - p \left (\phi^{a(i + 1)} = \tilde{\gamma} \right )\\
	&= 0
\end{align*}
\end{proof}

We are now ready to prove Lemmas \ref{p1_bound}, \ref{p2_bound}, and \ref{p3_bound}.
\begin{proof}[Proof of Lemma \ref{p3_bound}]
\begin{align}
\Pi_i^3 &= \sum_{v \in V(G)} [ P(\phi^i \in \Gamma_i(v, *)) - P(\phi^i \in \Gamma_i(*, v)) ] r_i(v) \nonumber \\
&= \frac{1}{2} \sum_{u,v \in V(G)} [P(\phi^i \in \Gamma_i(v, u)) - P(\phi^i \in \Gamma_i(u, v))] [r_i(v) - r_i(u)] \nonumber \\
&= \frac{1}{2} \sum_{u , v \in V(G)} [ P(\phi^i \in \Gamma_i^{nc} (v, u)) - P(\phi^i \in \Gamma_i^{nc}(v, u)) ] [r_i(v) - r_i(u)] {\label{twist_use}} \\
& \leq \frac{1}{2} \sum_{v, u \in V(G)} \sup_{0 \leq j, k \leq t} [P(\phi^i \in \Gamma_i^{nc}(v, u)) - P(\phi^i \in \Gamma_i^{nc}(u, v)) ] \left [ \log \left ( \frac{ d(v) - j}{ d(u) - k} \right ) \right ] \nonumber \\
& \leq \frac{1}{2} \sum_{u, v \in V(G)} |P(\phi^i \in \Gamma_i^{nc}(v, u)) - P(\phi^i \in \Gamma_i^{nc}(u, v)) | \left [ \left | \log \left ( \frac{ d(v)}{ d(u)} \right ) \right | + \frac{8t}{d} \right ] \nonumber \\
& \leq \sum_{u, v \in V(G)} \frac{8t^2}{d} P(\phi^i \in \Gamma_i^{nc}(v, u)) \left [ \left | \log \left ( \frac{ d(v)}{ d(u)} \right ) \right | + \frac{8t}{d} \right ] {\label{reverse_use}}
\end{align}
(\ref{twist_use}) is Lemma \ref{twist} and (\ref{reverse_use}) follows from Lemma \ref{reverse_bound_2}.
\end{proof}

\begin{proof}[Proof of Lemma \ref{p2_bound}] 
	For $u, v \in V(G)$, let $\Gamma_i(*, u, v)$ be the set of elements $\gamma \in \mon(T^i, G)$ such that $\gamma_{a(i + 1)} = v$ and $u$ is in $\im(\gamma)$. Let $\Gamma_i(v, u, *)$ be the set of $\gamma \in \mon(T^i, G)$ such that $\gamma_0 = v$, $u$ is in $\im(\gamma)$ \textit{and} if $\gamma_j = u$, then $x_j$ has an ancestor in $T^j$. (For technical reasons there is an asymmetry in the definition between $\Gamma_i( *, u, v)$ and $\Gamma_i(v, u, *)$).
\begin{align}
	\sum_{i = 1}^{t-1}\Pi_i^2 &= \sum_{i = 1}^{t - 1} \sum_{v \in V} \frac{d(v)}{2|E(G)|} (r_i(v) - \log (d(v) - i)) \nonumber \\
	&= \sum_{ i = 1}^{t-1} \sum_{ u \in V(G) } \frac{ P( \phi^i \in \Gamma_i(u, *) )} { P( \phi^i \in \Gamma_i (*, u))} \sum_{\gamma \in \Gamma(*, u) } P(\phi^i = \gamma) \log \left ( \frac{d(u) - | N(u) \cap \{\gamma_0, \cdots, \gamma_{i - 1} \} | }{d(u) - i} \right ) \nonumber \\
	&\geq \frac{1}{2} \sum_{ i = 1}^{t-1} \sum_{ u \in V(G) } \sum_{\gamma \in \Gamma(*, u) } P(\phi^i = \gamma) \log \left ( \frac{d(u) - | N(u) \cap \{\gamma_0, \cdots, \gamma_{i - 1} \} | }{d(u) - i} \right ) {\label{first}} \\
	& \geq \frac{1}{2} \sum_{ i = 1}^{t-1} \sum_{u \in V(G) }  \sum_{\gamma \in \Gamma(*, u) } P(\phi^i = \gamma) | \{ j : j \neq a(i + 1), \{\gamma_j, \gamma_{a(i + 1)} \} \not \in E(G) \} | \log \left ( \frac{d(u) + 1}{d(u)} \right ) {\label{end}}
\end{align}
(\ref{first}) follows from Corollary \ref{cor}. (\ref{end}) counts weighted pairs of embedded trees and missing edges. In the following we write $\sum_{(u, v) \not \in E(G) }$ to mean we are summing over ordered pairs $(u, v)$ such that $\{u, v \} \not \in E(G)$. We can sum over the missing edges and obtain that this is equal to
\begin{align}
	& \frac{1}{2} \sum_{i = 1}^{t - 1} \sum_{(v, u) \not \in E(G)}  \sum_{\gamma \in \Gamma_i (*, v,u) } P( \phi^i = \gamma) \log \left ( \frac{ d(u) + 1}{ d(u)} \right ) \nonumber \\
	& \geq \frac{1}{8} \sum_{i = 1}^{t-1} \sum_{(v, u) \not \in E(G)} [P( \phi^i \in \Gamma_i(*, v, u) ) + P(\phi^i \in \Gamma_i(u, v) )] \log \left ( \frac{d(u) + 1}{ d(u)} \right ) {\label{p2_bound_2}}\\
	& = \frac{1}{8} \sum_{i = 1}^{t-1} \sum_{(v, u) \not \in E(G)} \left [ P( \phi^i \in \Gamma_i( *, v, u)) + \sum_{j = 1}^i \frac{1}{t - j} \sum_{ \gamma \in \Gamma_j(u, v)} P(\phi^j  = \gamma)  \right ] \log \left ( \frac{d(u) + 1}{ d(u)} \right ) \nonumber \\
	& \geq \frac{1}{8t} \sum_{i = 1}^{t-1} \sum_{(v, u) \not \in E(G)} \left ( P( \phi^i \in \Gamma_i( *, v, u)) + \sum_{j = 1}^i \sum_{ \gamma \in \Gamma_j(u, v)} P(\phi^j = \gamma) \right ) \log \left ( \frac{d(u) + 1}{ d(u)} \right ) {\label{end2}}
\end{align}
(\ref{p2_bound_2}) follows from Corollary \ref{cor} and the fact that $P(\phi^i \in \Gamma_i(v, u)) \leq P(\phi^i \in \Gamma_i(*, v, u) )$. Every element in $\Gamma_i(u, v, *)$ restricts to an element in in $\Gamma_j(u, v)$ for some $j$ so (\ref{end2}) is at least
\begin{align}
	& \frac{1}{8t} \sum_{i = 1}^{t-1} \sum_{(v, u) \not \in E(G)} \left ( P( \phi^i \in \Gamma_i( *, v, u) + P( \phi^i \in \Gamma_i(u, v, *) \right ) \log \left ( \frac{d(u) + 1}{ d(u)} \right ) {\label{second_to_last_step}} 
\end{align}
Now notice that if $\phi^i \in \Gamma_i^{nc}(v, u)$, then there is an $x_j \in T^{a(i + 1)}$ which is not a leaf such that either $(v, \phi_j) \not \in E(G)$ or $(\phi_j, u) \not \in E(G)$. In the former case $\phi^i \in \Gamma_i(*, \phi_j, u)$ and in the latter case $\phi^i \in \Gamma_i(v, \phi_j, *)$. So we conclude that (\ref{second_to_last_step}) it at least
\begin{align}
	&\frac{1}{8t} \sum_{i = 1}^{t-1} \sum_{u, v \in V(G)} P( \phi^i \in \Gamma_i^{nc} (v, u)) \min \left \{ \log \left ( \frac{d(u) + 1}{ d(u)} \right ) , \log \left ( \frac{ d(v) + 1}{ d(v)} \right ) \right \} \nonumber \\
	&\geq \frac{1}{16t} \sum_{i = 1}^{t-1} \sum_{u, v \in V(G)} P( \phi^i \in \Gamma_i^{nc} (v, u)) \min \left \{ \frac{1}{ d(u) }, \frac{1}{d(v)} \right \} \nonumber
\end{align}
\end{proof}

\begin{proof}[Proof of Lemma \ref{p1_bound}] For $v \in V(G)$, let $L(v) =  c_{v} \log \left ( \frac{c_{v} d - i}{d - i} \right ) - (c_{v} - 1) \frac{d}{d - i}$.  
Then by Lemma \ref{jensens_bound} 
\begin{align*}
	\Pi_{i}^3 &= \sum_{v \in V} \frac{d(v)}{2|E(G)|} \log( d(v) - i) ) - \log(d - i)\\
	&= \frac{1}{n} \sum_{v \in V} L(v)\\
	& = \frac{1}{n} \sum_{u, v \in V(G) } \frac{2 |E|}{ d(v)} P(\phi^i \in \Gamma_i(v, u)) L(v)\\
	& = \frac{1}{2} \sum_{u, v \in V(G) } P(\phi^i \in \Gamma_i(v, u) ) \frac{L(v)}{c_{v}}\\
	& = \frac{1}{4} \sum_{u, v \in V(G) } P(\phi^i \in \Gamma_i(v, u)) \frac{L(v)}{c_{v}}+ P( \phi^i \in \Gamma_i (u, v)) \frac{L(u)}{c_{u}}  \\
	& \geq \frac{1}{8} \sum_{u, v \in V(G) } P(\phi^i \in \Gamma_i(v, u)) \left ( \frac{L(v)}{c_{v}}  + \frac{L(u)}{c_{u}} \right )\\
	& \geq \frac{1}{8} \sum_{u,v \in V(G) } P(\phi^i \in \Gamma_i^{nc}(v, u)) \left ( \frac{L(v)}{c_{v}}  + \frac{L(u)}{c_u} \right )
\end{align*}
Each term is non-negative because the function $x \log (x - k)$ is convex when $x \geq 2k$. Similary, by convexity, $L(v)/c_v$ is increasing when $c_v \geq 1$ and decreasing when $i/(d - i) \leq c_v \leq 1$.
\end{proof}
\section{Concluding Remarks}
\begin{itemize}
\item Our proof technique of Theorem \ref{main} can obtain a bound of $d_0(t) = O(t^4)$.
\item If $T$ is a tree and has parts of size $t_1, t_2$ as a bipartite graph, then if $G$ is an $n$-vertex bipartite graph with $e$ edges and parts of size $a$ and $b$, then using the proof technique of Theorem \ref{main}, we obtain the bound
	\[
	|\mon(T, G)| \geq a  \left ( \frac{e}{a} \right )_{t_2} \left ( \frac{e}{b} \right )_{t_1 - 1} + b \left ( \frac{e}{a} \right ) _{t_1 - 1} \left (\frac{e}{b} \right )_{t_2} \]
which extends the results of Hoory~\cite{Hoory} for non-backtracking walks in bipartite graphs.
\item One could ask the same question with forests replaced by trees. In this case a disjoint union of cliques is no longer the minimizer. For example if $F$ is the disjoint union of two edges, then if $G$ is the disjoint union of $n/(d + 1)$ cliques of size $d + 1$, there are about $n^2 d^2$ embedding of $F$ into $G$. On the other hand the average degree of $K_{d/2, n - d/2}$ is approximatly $d$ but there are only about $n^2 d (d - 2)$ embeddings of $F$ into $K_{d/2, n - d/2}$. In fact if $F$ is the disjoint union of $k$ edges, then one can show by induction and smoothing that the graph $G$ with vertex set $A \cup B$ and $ \{u, v \} \in E(G)$ iff $u \in A$ or $v \in A$ is a minimizer of $|\mon(F, G')|$ over all graphs $G'$ with $|V(G')| = |V(G)|$ and $|E(G')| = |E(G)|$. Moreover if $G$ is an $n$-vertex graph with average degree $d$, then $|\mon(F, G)| \geq (1 - o(1)) n(n -2) \cdots (n - 2k + 2) d(d - 2) \cdots (d - 2k + 2)$.

\end{itemize}

\section{Appendix}
Here is a short argument that gets a bound 
\[
|\mon(T, G)| \geq n (d - t(t - 1))^t = \left ( 1 - O \left ( \frac{t^3 }{d} \right ) \right ) n (d)_t
\]
\begin{proof}

For simplicity assume that $T$ is a path. Let us imagine picking a vertex and taking a walk of length $t$. Instead of requiring our walk to be a path i.e. not intersecting itself, an adversary will prevent us from taking certain steps; they will try to minimize the number of walks we can take by - at step $i$ - picking at most $i$ neighbors of our current vertex and forbidding us from traveling to any of them during our next move.

\medskip

On the one hand, our adversary can restrict us to only taking paths by forbidding us from traveling to vertices we have already visited. On the other hand, the adversary's optimal strategy does not need to depend on the history of our walk - indeed at step $i$ they are simply trying to minimize the number of walks of length $t - i$ we can take from that point onward. So for every vertex $v$ and $0 < i < t$, we have a set $F_{v, i}$ of neighbors our adversary would forbid if we were at $v$ at step $i$ in our walk where $|F_{v, i}| = i$. Let $G' = (V(G), E')$ where $E' = E(G) - \{ \{ v , w \} : v \in V, w \in F_{v,1} \cup \cdots \cup F_{v, t - 1} \}$. Then the number of walks we can make, is at least the total number of walks in $G'$ which has average degree at least $ d - t(t - 1)$. So, by a well known lower bound on the number of walks in a graph \cite{Sidorenko}, $| \mon(T, G)| \geq n (d - t(t - 1))^t$
\end{proof}

\begin{lemma} [Error Bound on Jensen's Inequality] {\label{jensens_bound}} Let $d_1, \cdots, d_n \geq k$ and $E[d_i] = d$. Let $c_i = d_i/d$. Then
\[
\left ( \frac{1}{d} \sum_{i = 1}^n \frac{d_i}{n} \log(d_i - k) \right ) - \log(d - k) = \frac{1}{n} \sum_{i = 1}^n \left [ c_i \log \left ( \frac{ c_i d - k}{d - k} \right ) - (c_i - 1) \left ( \frac{d}{d - k} \right ) \right ] \\
\]
\end{lemma}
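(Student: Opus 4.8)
The plan is to recognize that the stated equality is not an inequality at all but a bare algebraic identity, so ``proving'' it amounts to unwinding both sides until they visibly coincide. The only structural facts I would use are the two consequences of the hypothesis $E[d_i]=d$ (read as $\frac1n\sum_{i=1}^n d_i=d$): dividing through by $d$ gives $\frac1n\sum_{i=1}^n c_i=1$, and subtracting $1$ from each weight gives $\frac1n\sum_{i=1}^n (c_i-1)=0$. No convexity is needed here; this lemma is purely the bookkeeping step that isolates the Jensen defect, and the convexity input (that $x\mapsto x\log(x-k)$ is convex for $x\ge 2k$, equivalently that $L(v)/c_v$ is monotone) enters only later, in Lemma \ref{p1_bound} and Claim \ref{main_claim}.

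First I would rewrite the left-hand side using $d_i=c_i d$, so $d_i-k=c_i d-k$ and
\[
\frac{1}{d}\sum_{i=1}^{n}\frac{d_i}{n}\log(d_i-k)=\frac{1}{n}\sum_{i=1}^{n}c_i\log(c_i d-k).
\]
Next, on the right-hand side I would split the logarithm, $\log\!\left(\tfrac{c_i d-k}{d-k}\right)=\log(c_i d-k)-\log(d-k)$, and use $\frac1n\sum_{i=1}^n c_i=1$ to collapse the $c_i$-weighted copies of $\log(d-k)$:
\[
\frac{1}{n}\sum_{i=1}^{n}c_i\log\!\left(\frac{c_i d-k}{d-k}\right)=\frac{1}{n}\sum_{i=1}^{n}c_i\log(c_i d-k)-\log(d-k).
\]
Finally, the remaining term on the right vanishes by $\frac1n\sum_{i=1}^n(c_i-1)=0$:
\[
-\frac{1}{n}\sum_{i=1}^{n}(c_i-1)\,\frac{d}{d-k}=-\frac{d}{d-k}\cdot\frac{1}{n}\sum_{i=1}^{n}(c_i-1)=0.
\]
Adding the last two displays shows the right-hand side equals $\frac1n\sum_{i=1}^n c_i\log(c_i d-k)-\log(d-k)$, which is exactly the left-hand side as rewritten in the first step.

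There is no real obstacle here: the only things to watch are the implicit $\frac1n$ normalization hidden in $E[\cdot]$ and keeping the stray $\pm\log(d-k)$ terms straight, together with the mild standing assumption $d_i>k$ and $d>k$ so that every logarithm is defined (which holds in all intended applications, where $d_i\ge\delta(G)\ge d/4$, $k=i<t$, and $d$ is large). One could alternatively phrase the identity as ``$\log$ of the weighted product of the $c_i$ equals the weighted sum of their logs,'' but the weighted-average form above is precisely the shape used downstream in the proof of Lemma \ref{p1_bound}.
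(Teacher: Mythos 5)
Your proof is correct and is essentially the same computation as the paper's: both hinge only on $\frac1n\sum_i c_i=1$ (equivalently, that the linear term $(c_i-1)\frac{d}{d-k}$ has zero average), so the identity reduces to pure bookkeeping. The only cosmetic difference is direction — the paper starts from the left side, subtracts the tangent line of $\varphi(x)=x\log(x-k)$ at $x=d$ via the general fact $E[\varphi(x)]-\varphi(E[x])=E[\varphi(x)-\varphi(E[x])-\varphi'(E[x])(x-E[x])]$, and then specializes, while you expand the right side and let the mean-zero terms vanish.
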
 
\begin{proof} In general for a differentiable function $\varphi$ and $x$ a random variable, letting $y = E[x]$,
	\[
	E[ \varphi(x)] - \varphi(E[x]) = E[\varphi(x)] - E[  \varphi(E[x]) - \varphi'(y) (x - E[x] ) ] = E[ \varphi(x) - \varphi(E[x]) - \varphi'(y) (x - E[x]) ]
	\]
	Applying this to $x = d_i$ and $\varphi(x) = x \log(x - k)$ gives
	\begin{align*}
			\frac{1}{d} \sum_{i = 1}^n \frac{d_i}{n} \log(d_i - k) - \log(d - k) &= \frac{1}{dn} \sum_{i = 1}^n c_i d \log( c_i d - k) - d \log(d - k) - \left ( \log(d - k) + \frac{d}{d - k} \right ) (c_i d - d)\\
				& = \frac{1}{dn} \sum_{i = 1}^n d \left [ c_i \log \left ( \frac{ c_i d - k}{d - k} \right ) - (c_i - 1) \left ( \frac{d}{d - k} \right ) \right ] \\
					& = \frac{1}{n} \sum_{i = 1}^n \left [ c_i \log \left ( \frac{ c_i d - k}{d - k} \right ) - (c_i - 1) \left ( \frac{d}{d - k} \right ) \right ] \\
	\end{align*}
\end{proof}

\begin{proof} [Proof that Lemma \ref{weak} implies Theorem \ref{main} ]
We assume that $t \geq 3$ as otherwise Theorem \ref{main} is trivial. If any vertex $v \in V(G)$ has degree less than $d/4$ then we will delete it. We observe that, letting $G'$ be the resulting graph,
\begin{itemize}
\item $ \mon(T, G') \subseteq \mon(T, G)$
\item $G'$ has $n - 1$ vertices and average degree $d' \geq d$
\item $ (n - 1) (d')_t \geq n (d)_t$
\end{itemize}
We continue deleting vertices in this way until we reach a graph $\hat{G}$ with $\hat{n}$ vertices such that the average degree $\hat d$ satisfies $\delta( \hat G) \geq \hat d/4$. At this point we may apply Lemma \ref{weak} to $\hat{G}$ to conclude there are at least $\hat{n} (\hat{d})_t$ copies of $T$ in $\hat G$. Since $\mon(T, \hat{G} ) \subseteq \mon(T, G)$, we have $|\mon(T,G) | \geq |\mon( T, \hat G) | \geq \hat{n} (\hat d)_t \geq n (d)_t$.
\end{proof}

\section{Acknowledgments}
We would like to thank Jaques Verstraete for helpful conversations and comments on this paper.

\end{document}